\newtheorem{theorem}{Theorem}[section]
\newtheorem{lemma}[theorem]{Lemma}
\theoremstyle{definition}
\newtheorem{definition}[theorem]{Definition}
\numberwithin{equation}{section}
\newcommand{\Af}[1]{\vv*{A}{#1}}
\newcommand{\intv}{\mathbb{I}}
\newcommand{\vect}{\mathrm{vect}}
\newcommand{\smat}[1]{ \left[\begin{smallmatrix} #1 \end{smallmatrix}\right] }
\newcommand{\suchthat}{\ifnum\currentgrouptype=16 \;\middle|\;\else\mid\fi}
\newcommand{\fun}[2]{\left[#1,#2\right]}
\newcommand{\lessf}{\mathbin{{<}_{\scriptscriptstyle f}}}
\newcommand{\defeq}{\coloneqq}
\DeclareMathOperator{\rep}{rep}
\DeclareMathOperator{\End}{End}
\DeclareMathOperator{\Hom}{Hom}
\def\Int #1{\expandafter\Int@i#1\@nil}
\def\Int@i #1,#2\@nil{{#1{:}#2}}
\def\IntC #1{\expandafter\IntC@i#1\@nil}
\def\IntC@i #1,#2\@nil{{#1{,}#2}}
\def\itoi #1{\expandafter\itoi@i#1\@nil}
\def\itoi@i #1,#2,#3,#4\@nil{_{\Int{#1,#2}}^{\Int{#3,#4}}}
\def\clap#1{\hbox to 0pt{\hss#1\hss}}
\def\mathrlap{\mathpalette\mathrlapinternal}
\def\mathrlapinternal#1#2{%
  \rlap{$\mathsurround=0pt#1{#2}$}}
\let\save@mathaccent\mathaccent
\newcommand*\if@single[3]{%
  \setbox0\hbox{${\mathaccent"0362{#1}}^H$}%
  \setbox2\hbox{${\mathaccent"0362{\kern0pt#1}}^H$}%
  \ifdim\ht0=\ht2 #3\else #2\fi
  }
\newcommand*\rel@kern[1]{\kern#1\dimexpr\macc@kerna}
\newcommand*\widebar[1]{\@ifnextchar^{{\wide@bar{#1}{0}}}{\wide@bar{#1}{1}}}
\newcommand*\wide@bar[2]{\if@single{#1}{\wide@bar@{#1}{#2}{1}}{\wide@bar@{#1}{#2}{2}}}
\newcommand*\wide@bar@[3]{%
  \begingroup
  \def\mathaccent##1##2{%
    \let\mathaccent\save@mathaccent
    \if#32 \let\macc@nucleus\first@char \fi
    \setbox\z@\hbox{$\macc@style{\macc@nucleus}_{}$}%
    \setbox\tw@\hbox{$\macc@style{\macc@nucleus}{}_{}$}%
    \dimen@\wd\tw@
    \advance\dimen@-\wd\z@
    \divide\dimen@ 3
    \@tempdima\wd\tw@
    \advance\@tempdima-\scriptspace
    \divide\@tempdima 10
    \advance\dimen@-\@tempdima
    \ifdim\dimen@>\z@ \dimen@0pt\fi
    \rel@kern{0.6}\kern-\dimen@
    \if#31
      \overline{\rel@kern{-0.6}\kern\dimen@\macc@nucleus\rel@kern{0.4}\kern\dimen@}%
      \advance\dimen@0.4\dimexpr\macc@kerna
      \let\final@kern#2%
      \ifdim\dimen@<\z@ \let\final@kern1\fi
      \if\final@kern1 \kern-\dimen@\fi
    \else
      \overline{\rel@kern{-0.6}\kern\dimen@#1}%
    \fi
  }%
  \macc@depth\@ne
  \let\math@bgroup\@empty \let\math@egroup\macc@set@skewchar
  \mathsurround\z@ \frozen@everymath{\mathgroup\macc@group\relax}%
  \macc@set@skewchar\relax
  \let\mathaccentV\macc@nested@a
  \if#31
    \macc@nested@a\relax111{#1}%
  \else
    \def\gobble@till@marker##1\endmarker{}%
    \futurelet\first@char\gobble@till@marker#1\endmarker
    \ifcat\noexpand\first@char A\else
      \def\first@char{}%
    \fi
    \macc@nested@a\relax111{\first@char}%
  \fi
  \endgroup
}
\title{The Whole in the Parts: Putting $n$D Persistence Modules Inside Indecomposable $(n+1)$D Ones}
\author[1]{Micka\"{e}l Buchet}
\affil[1]{Institute for Geometry, Graz University of Technology, Austria}
\affil[1]{buchet@tugraz.at}
\author[2]{Emerson G. Escolar}
\affil[2]{Graduate School of Human Development and Environment, Kobe University, Japan; and Center for Advanced Intelligence Project, RIKEN, Tokyo, Japan}
\affil[2]{e.g.escolar@people.kobe-u.ac.jp}
\begin{document}
\maketitle
\begin{abstract}
Multidimensional persistence has been proposed to study the persistence of topological features in data indexed by multiple parameters.
In this work, we further explore its algebraic complications from the point of view of higher dimensional indecomposable persistence modules containing lower dimensional ones as hyperplane restrictions. 
Our previous work constructively showed that any finite rectangle-decomposable $n$D persistence module is the hyperplane restriction of some indecomposable $(n+1)$D persistence module, as a corollary of the result for $n=1$. Here, we extend this by dropping the requirement of rectangle-decomposability. Furthermore, in the case that the underlying field is countable, we construct an indecomposable $(n+1)$D persistence module containing all $n$D persistence modules, up to isomorphism, as hyperplane restrictions. Finally, in the case $n=1$, we present a minimal construction that improves our previous construction.
\end{abstract}

\noindent\textbf{Keywords} Multidimensional Persistent Homology, Quiver Representation Theory, Lower Bounds\\
\textbf{Mathematics Subject Classification (2010)} 16G20, 55N99

\section{Introduction}

The structure of multidimensional persistence modules is known to be very complicated, with no complete discrete invariant \cite{carlsson2009theory}.
In the $2$D case, one idea for extracting information is to restrict them along lines in order to obtain $1$D persistence modules, which have structures completely described by barcodes.
This idea arises naturally in many situations.
For example, this is used in the software RIVET~\cite{lesnick2015interactive} to provide an interactive graphical exploration of $2$D persistence modules via barcodes of line restrictions.
In fact, the rank invariant \cite{carlsson2009theory} is equivalent to the barcodes of all such line restrictions of
the $2$D persistence module.
Furthermore, the idea of line restrictions is also the foundation of the kernel for multidimensional persistence proposed in~\cite{corbet2019kernel}.
The notion naturally generalizes to higher dimensions, where we can consider restrictions of $m$D persistence modules to $n$D hyperplanes.
We focus on hyperplane restrictions where $m=n+1$.

In a previous work~\cite{buchet2020every} we
showed that every finite dimensional $1$D persistence module can in fact be found as a line restriction of some indecomposable $2$D persistence module.
Moreover we built a pointwise finite dimensional indecomposable $2$D persistence module that contains all finite dimensional $1$D persistence modules as line restrictions.
This implies that all the ``complexity'' of $1$D persistence modules is contained in the indecomposable $2$D persistence modules.
We also extended the result,
showing that every finite rectangle-decomposable $n$D persistence module is the restriction of some indecomposable $(n+1)$D persistence module.
In this paper, we address several questions left open by the previous work.

First, in Theorem~\ref{thm:nD} we extend the $n$D result to the general case by providing a construction that, for any $n$D persistence module $V$, builds an indecomposable $(n+1)$ persistence module $M$ having $V$ as a hyperplane restriction. That is, we drop the requirement of rectangle-decomposability.
Note that a similar but slightly more space-consuming construction has recently and independently been proposed by Moore~\cite{moore2020hyperplane}.
Next, in Theorem~\ref{thm:lotr}, we build a single indecomposable $(n+1)$D persistence module containing all finite dimensional $n$D persistence modules as hyperplane restrictions, when the base field is countable.

Another question left open was the minimality of the construction.
The construction previously proposed, as well as its extension in the first part of this paper, requires the stacking of several layers of $n$D persistence modules to construct an indecomposable $(n+1)$D persistence module.
What is the minimum number of layers needed?
In Theorem~\ref{thm:minimal}  we provide an improved construction requiring exactly three layers for $1$D persistence modules and show that two layers are not enough.
Finally, in Theorem~\ref{thm:minimal_rect}, we show that the construction using three layers can be generalized to realize finite rectangle-decomposable $n$D persistence modules as hyperplane restrictions of indecomposable $(n+1)$D persistence modules. Furthermore, using a fourth layer, we can realize any $n$D persistence module as a hyperplane restriction of an indecomposable $(n+1)$D persistence module. We do not know whether or not this is not the minimum number of layers needed when $n > 1$.

\section{Definitions}
We denote elements (also called vertices) of $\mathbb{Z}^n$ as $n$-tuples $\vv{b} = (b^1,b^2,\hdots,b^n)\in\mathbb{Z}^n$, and consider the poset $(\mathbb{Z}^n,\leq)$, where $\vv{b} \leq \vv{d}$ if and only if $b^k \leq d^k$ for all $k=1,\hdots,n$. We view the poset $(\mathbb{Z}^n,\leq)$ as a category in the usual way.
For $\vv x \leq \vv y$ in $\mathbb{Z}^n$, we write the unique morphism from $\vv x$ to $\vv y$ as $\vv x \leq \vv y$, so composition is written as $(\vv y \leq \vv z)(\vv x \leq \vv y) = (\vv x \leq \vv z)$.

A pointwise finite dimensional \emph{$n$D persistence module} is a functor $M$ from $\mathbb{Z}^n$ to the category of finite dimensional $K$-vector spaces, written as $M \in \rep \mathbb{Z}^n = \fun{\mathbb{Z}^n}{\vect_K}$. By functoriality, $M(\vv y \leq \vv z)M (\vv x\leq \vv y) = M(\vv x \leq \vv z)$ for all $\vv x \leq \vv y \leq \vv z$ in $\mathbb{Z}^n$. We shall use the term ``commutativity relation'' to denote this requirement, when talking about persistence modules. In this context, we also call $\mathbb{Z}^n$ an $n$-dimensional commutative grid. The maps $M(\vv x \leq \vv y)$ are also called the internal maps of $M$.

The \emph{support} of a persistence module $M$ is
$
  \{M(\vv{x}) \neq 0 \suchthat \vv{x} \in \mathbb{Z}^n\};
$
$M$ is said to have finite support if its support is a finite set.
Throughout this work, unless specified otherwise, \emph{$n$D persistence module} shall be taken to mean pointwise finite dimensional $n$D persistence module with finite support.

As objects of a category of functors to $\vect_K$, we have the usual concepts of morphisms (natural transformations) between $n$D persistence modules, and direct sums, decomposability, and indecomposability of $n$D persistence modules.
Furthermore, the endomorphism ring of an $n$D persistence module $M$ is $\End(M) = \Hom(M,M)$, the set of morphisms from $M$ to itself, with multiplication by composition.
Since we want to construct indecomposable persistence modules, the follow fact is useful, where we recall that a ring with unity $R$ is said to be \emph{local} if $0 \neq 1$ in $R$ and for each $x\in R$, $x$ or $1-x$ is invertible.
\begin{lemma}[Corollary 4.8 of \cite{assem2006elements}]\label{lem:locality}
  \label{lem:indec_endo}
  Let $V$ be a representation of a finite bound quiver $Q$.
  \begin{enumerate}
  \item If $\End{V}$ is local then $V$ is indecomposable.
  \item If $V$ is finite dimensional and indecomposable, then $\End{V}$ is local.
  \end{enumerate}
\end{lemma}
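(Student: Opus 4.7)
The plan is to treat the two implications separately, relying on the correspondence between direct sum decompositions and nontrivial idempotents for (1), and on Fitting's lemma for (2).

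For part (1), I would argue by contrapositive. Suppose $V = V_1 \oplus V_2$ is a nontrivial decomposition, and let $e \in \End(V)$ be the projection onto $V_1$; then $e \neq 0$, $e \neq 1$, and $e^2 = e$. But in a local ring the only idempotents are $0$ and $1$: for any idempotent $e$, locality forces $e$ or $1-e$ to be invertible, and multiplying $e^2 = e$ (respectively $(1-e)^2 = 1-e$) by the corresponding inverse yields $e = 1$ (respectively $e = 0$). This contradicts the existence of a nontrivial idempotent, so locality of $\End(V)$ forces $V$ to be indecomposable.

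For part (2), I would invoke Fitting's lemma: for any endomorphism $f$ of a finite dimensional representation $V$, the chains $\Ker f^k$ and $\Ima f^k$ stabilize, and for $n$ sufficiently large one has a decomposition $V = \Ker f^n \oplus \Ima f^n$ in the category of representations. Indecomposability of $V$ then forces one summand to vanish. If $\Ker f^n = 0$, then $f^n$ is injective, hence an isomorphism by finite dimensionality, so $f$ itself is invertible. If $\Ima f^n = 0$, then $f$ is nilpotent and $1 - f$ is invertible with explicit inverse $\sum_{k \geq 0} f^k$ (a finite sum). Either way, $f$ or $1-f$ is invertible, so $\End(V)$ satisfies the local condition.

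The main technical point is the representation-level version of Fitting's lemma; however, this reduces to the standard vector space statement once one observes that $\Ker f^n$ and $\Ima f^n$ are stable under the structure maps of $V$ (because $f$ is a morphism of representations and so commutes with them), which lifts the pointwise vector space decomposition to a decomposition of representations. Beyond this, the argument is purely elementary, so I expect no serious obstacles.
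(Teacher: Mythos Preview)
Your argument is correct and is essentially the standard proof of this fact (idempotent splitting for the first implication, Fitting's lemma for the second). The only cosmetic omission is the check that $0 \neq 1$ in $\End(V)$ for part~(2), which follows immediately since an indecomposable representation is nonzero by convention.

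However, there is nothing to compare against: the paper does not prove this lemma at all. It is stated with a citation to Corollary~4.8 of Assem--Simson--Skowro\'nski, \emph{Elements of the Representation Theory of Associative Algebras}, and used as a black box throughout. Your proof is in fact the same argument given in that reference, so you have simply supplied what the paper chose to import.
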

The above corollary is stated for finite bound quivers. However, we note that for each $n$D persistence module $M$ with finite support, we can consider $M$ as a representation of the quiver bound by commutative relations corresponding to the support of $M$. This leads to the fact that for $M$ an $n$D persistence module with finite support, $\End(M)$ is local if and only if $M$ is indecomposable. Note that $M$ is automatically finite dimensional if it has finite support, since $M \in \fun{\mathbb{Z}^n}{\vect_K}$ is pointwise finite dimensional.

Next, we recall the definition of hyperplane restrictions.
\begin{definition}[Hyperplane Restriction, Definition~7.6 of \cite{buchet2020every}]
  \label{defn:hyperplane_restriction}
  Let $n$ be a positive integer.
  \begin{enumerate}
  \item A \emph{hyperplane} $L$ is a functor $L : (\mathbb{Z}^n,\leq) \rightarrow (\mathbb{Z}^{n+1},\leq)$ that is injective on objects.
  \item Let $V$ be an $n$D persistence module. We say that $V$ is a \emph{hyperplane restriction} of the $(n+1)$D persistence module $M$ if there is a hyperplane $L$ such that $ML \cong V$.
  \end{enumerate}
\end{definition}
In the case $n=1$, we have a \emph{line} defined as a functor $L : (\mathbb{Z},\leq) \rightarrow (\mathbb{Z}^{2},\leq)$ that is injective on objects, and correspondingly a \emph{line restriction} to $1$D persistence modules.
Note that the points in the image of a line (hyperplane) $L$ do not need to be ``adjacent''. That is, we are free to skip points in taking line restrictions (hyperplane restrictions). We do not take advantage of this in Theorem~\ref{thm:rectangles}, Theorem~\ref{thm:nD}, nor Theorem~\ref{thm:lotr}. On the other hand, our compressed constructions in Theorem~\ref{thm:minimal} and Theorem~\ref{thm:minimal_rect} take advantage of this.

Let us also recall the following definition.
For $\vv{b} \leq \vv{d}$ in $\mathbb{Z}^n$, the \emph{finite rectangle $n$D persistence module} $\intv[\vv{b},\vv{d}]$ is the functor $M$ that associates $M(\vv{x}) = K$ for $\vv{b} \leq \vv{x} \leq \vv{d}$,
and $M(\vv{x} \leq \vv{y})$ the identity map if $\vv{b} \leq \vv{x} \leq \vv{y} \leq \vv{d}$ and zero otherwise.
Finally, an $n$D persistence module is said to be \emph{finite rectangle-decomposable} if it is isomorphic to a finite direct sum of finite rectangle $n$D persistence modules.
In the $1$D case, rectangles $\intv[b,d]$ ($b\leq d\in\mathbb{Z}$) are called \emph{intervals}. Every pointwise finite dimensional $1$D persistence module is interval-decomposable (see \cite[Theorem~2.8]{chazal2016structure} for the result over $\mathbb{Z}$, and over $\mathbb{R}$, \cite{crawley2015decomposition}).

A $(n+1)$D persistence module can be interpreted as a sequence of $n$D persistence modules linked by morphisms.
Conversely, given a sequence $(M_i)_{i\in \mathbb{Z}}$ and morphisms $\phi_i:M_i\rightarrow M_{i+1}$, the \emph{stacking} of the modules and morphisms is defined as the $(n+1)$D persistence module $M$ such that for $\vv x \leq \vv y \in \mathbb{Z}^n$ and $i \in \mathbb{Z}$ we define
$M(\vv x,i)=M_i(\vv x)$,
$M((\vv x,i)\leq(\vv y, i))=M_i(\vv x\leq\vv y)$
and
$M((\vv x, i)\leq(\vv x, i+1))=\phi_i(\vv x)$,
where $(\vv x, i)$ is viewed as an element of $\mathbb{Z}^{n+1}$ in the obvious way.
The remaining internal maps of $M$ are defined through composition.
Note that they are well-defined due to the commutativity conditions in the $M_i$ and the fact that all $\phi_i$ are morphisms.
See the Stacking Lemma~\cite[Lemma~3.1]{buchet2020every}, for the $1$D to $2$D case.
Given an $M$ built by stacking $(M_i)$, we call each $M_i$ a layer of $M$.
We say that $M$ has $l$ layers when exactly $l$ of the $M_i$ are non-zero.
It is clear that if $M$ is indecomposable, then all the non-zero layers are adjacent.

\section{Results}
First, let us recall the following theorem and give a quick sketch of its proof.
\begin{theorem}[{\cite[Theorem~7.7]{buchet2020every}}]
  \label{thm:rectangles}
  Let $V$ be a finite rectangle-decomposable $n$D persistence module. Then, there exists an indecomposable $(n+1)$D persistence module $M$ with finite support such that $V$ is a hyperplane restriction of $M$.
\end{theorem}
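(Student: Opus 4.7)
The plan is to construct $M$ by a stacking of a finite sequence of rectangle-decomposable $n$D layers centered around $V$, and then to verify indecomposability via the locality of $\End(M)$ using Lemma~\ref{lem:indec_endo}. First I would decompose $V \cong \bigoplus_{i=1}^{k} \intv[\vv{b}_i,\vv{d}_i]$, place $V$ at layer $0$, and build layers $M_{-s}, \ldots, M_{-1}$ below and $M_1, \ldots, M_t$ above, equipped with vertical morphisms between consecutive layers.

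The auxiliary layers would be designed so that each rectangle summand of $V$ is linked to every other through a chain of non-zero morphisms passing through the extra layers' rectangle summands. For $n=1$ this is the case proved in our previous work, where the chains can be realized by placing auxiliary intervals in nearby layers whose endpoints are arranged so that certain morphisms into or out of the summands of $V$ are forced to be non-zero while remaining compatible with the commutativity relations. For general $n$ with $V$ rectangle-decomposable, I expect the $n=1$ construction to transport along a fixed coordinate direction by replacing each $1$D auxiliary interval by an $n$D rectangle obtained by cylindrical extension across the remaining $n-1$ coordinates; since each $\intv[\vv{b}_i,\vv{d}_i]$ restricts to an interval along each axis, this extension preserves the morphism structure and yields the desired linking.

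The main obstacle is verifying that $M$ is indecomposable, which by Lemma~\ref{lem:indec_endo} reduces to showing $\End(M)$ is local. Any $f \in \End(M)$ restricts to an endomorphism of each layer; on the $V$ layer, because $\Hom$ spaces between distinct rectangle summands are at most one-dimensional, $f|_V$ is described by scalars $\lambda_1,\ldots,\lambda_k$ on the summands $\intv[\vv{b}_i,\vv{d}_i]$ together with off-diagonal morphisms that will turn out to be nilpotent. The commutativity conditions with the vertical morphisms then propagate this scalar data between adjacent layers, and by the construction each link between summands $\intv[\vv{b}_i,\vv{d}_i]$ and $\intv[\vv{b}_j,\vv{d}_j]$ forces $\lambda_i = \lambda_j$. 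Since all summands are pairwise linked, a single common scalar $\lambda$ emerges, $f - \lambda \cdot \id_M$ acts nilpotently on every layer, and hence $\End(M) = K \cdot \id_M + \mathfrak{n}$ with $\mathfrak{n}$ a nil ideal. Therefore $\End(M)$ is local and $M$ is indecomposable, with finite support by construction, completing the proof.
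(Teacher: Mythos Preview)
Your outline shares the paper's high-level shape---stack rectangle-decomposable layers and argue indecomposability through the endomorphism ring---but the concrete construction differs and your reduction to $n=1$ has a real gap. The paper builds a \emph{one-sided} tower of exactly four layers $I_V \to \widebar{V} \to V' \to V$ via the explicit steps ``separate-and-shift'', ``verticalization'', and ``coning'', carried out directly in $\mathbb{Z}^n$, and proves the stronger fact $\End(S)\cong K$ (not merely local with a nil ideal). Verticalization arranges the summands of $\widebar{V}$ to be pairwise $\Hom$-orthogonal, so any endomorphism of $\widebar{V}$ is already diagonal; the coning map from the single rectangle $I_V$ then forces all diagonal scalars to coincide, and this common scalar propagates forward to $V'$ and $V$ through the diagonal maps. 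No nilpotent part survives.

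The step where your plan would fail as written is the ``cylindrical extension''. A nonzero morphism $\intv[\vv{b'},\vv{d'}]\to\intv[\vv{b},\vv{d}]$ requires $\vv{b}\leq\vv{b'}\leq\vv{d}\leq\vv{d'}$ in \emph{every} coordinate simultaneously, so the extent of each auxiliary rectangle in the $n-1$ ``cylindrical'' directions must be tailored to the individual summand of $V$ it is meant to link; a uniform cylinder either fails to admit the needed nonzero map to summands whose births or deaths in those directions sit outside it, or creates unintended nonzero $\Hom$'s between auxiliary rectangles that destroy the diagonal form. This is exactly the difficulty that separate-and-shift and verticalization resolve by moving the $\vv{b_i}$ and $\vv{d_i}$ themselves in $\mathbb{Z}^n$. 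Without a concrete replacement for that step, neither your claimed equalities $\lambda_i=\lambda_j$ nor the nilpotence of the off-diagonal part is justified, and the locality conclusion does not follow.
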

\begin{proof}[Sketch of proof]
  Without loss of generality, we assume that $V = \bigoplus_{i=1}^m \intv[\vv{b_i},\vv{d_i}]$.
  Below, we review the construction in \cite{buchet2020every}, which consists of the steps ``separate-and-shift'', ``verticalization'', and ``coning''.
  \begin{itemize}
  \item We ``separate-and-shift'' to distinct $\vv{d'_i} \in \mathbb{Z}^n$ such that $\vv{d'_i} \geq \vv{d_i}$ for all $i$ and $\frac{\vv{b_j} + \vv{d'_j}}{2} \leq \vv{d'_i}$ for all pairs $i$ and $j$.

  \item Take
    $
    \vv{\mu} = \max\limits_{i}\left(\frac{\vv{b_i} + \vv{d'_i}}{2}\right)
    $
    where $\max$ is taken component-wise. As part of ``verticalization'', we define $\vv{b_i'} = 2\vv{\mu} - \vv{d'_i}$, and check that
    $
    \vv{b_i} \leq \vv{b'_i} \leq \vv{\mu} \leq \vv{d'_j}
    $
    for all pairs $i$, $j$.

  \item Finally, ``coning'' involves the finite rectangle
    $
      I_V \defeq \intv\left[\max\limits_i\left(\vv{b'_i}\right), \max\limits_j\left(\vv{d'_j}\right)\right].
    $
  \end{itemize}

  We get the object $S$,
  an ($n$D persistence module)-valued representation of the quiver
  $
    \Af{4}: 0 \rightarrow 1 \rightarrow 2 \rightarrow 3,
  $
  given by
  \begin{equation}
    \label{eq:rectS}
    S:
    \begin{tikzcd}[ampersand replacement=\&, column sep=large]
      I_V \rar{\smat{1\\\vdots\\1}} \&
      \widebar{V} \rar{\smat{1 & & \\ & \ddots & \\ & & 1}} \&
      V' \rar{\smat{1 & & \\ & \ddots & \\ & & 1}} \&
      V
    \end{tikzcd}
  \end{equation}
  where
  \begin{itemize}
  \item $S(3)\defeq V$,
  \item $S(2) \defeq V'  = \bigoplus_{i=1}^m \intv[\vv{b_i},\vv{d'_i}]$ is obtained by separate-and-shift,
  \item $S(1) \defeq \widebar{V} = \bigoplus_{i=1}^m \intv[\vv{b'_i},\vv{d'_i}]$ by verticalization (of $V'$), and
  \item $S(0) \defeq I_V$ by coning.
  \end{itemize}
  The morphisms between the entries of $S$ in Diagram~\eqref{eq:rectS} are given in the ``matrix formalism'', as explained in \cite{buchet2020every}. We warn that the ``$1$'''s in Diagram~\eqref{eq:rectS} do not represent identity maps, but rather the identity $1\in K$ multiplied by a chosen basis element for the homomorphism space between the corresponding rectangle summands, as defined in Lemma~7.3 of \cite{buchet2020every}. These chosen basis elements shall be called \emph{canonical homomorphisms} in the rest of this paper.

  The arguments in \cite{buchet2020every} show that every endomorphism of $S$ is a multiplication by some scalar $c \in K$. This shows that $\End(S) \cong K$, and thus $S$ is indecomposable.
  By stacking its entries and morphisms, $S$ can be viewed as 
  an $(n+1)$D persistence module. The $(n+1)$D persistence module corresponding to $S$ has finite support, and $V$ is a hyperplane restriction by a natural inclusion $\mathbb{Z}^n \hookrightarrow \mathbb{Z}^{n+1}$.
\end{proof}

Let us extend the above result to $n$D persistence modules in general.
Using the existence of projective covers and injective envelopes (see Section~I.5 of \cite{assem2006elements}) for $n$D persistence modules over finite commutative grids, together with the following Lemma, we are able to use the finite rectangle-decomposable modules as an intermediate step to build corresponding indecomposables in the general case. Note that directly using projective covers  over the entire  $\mathbb{Z}^n$ leads to rectangles with infinite support, something we want to avoid.

\begin{lemma}[Padding by zeros]
\label{lem:padding}
    Let $G$ be the subcategory $G = \{-m,\hdots,m\}^n \subset \mathbb{Z}^n$. Then for each functor
    $V' \in \fun{G}{\vect_K}$,
    \[
        V(x) = 
        \left\{
        \begin{array}{ll}
        V'(x) & \text{if } x \in G, \\
        0 & \text{otherwise,}
        \end{array}
        \right.
        V(x\leq y) = 
        \left\{
        \begin{array}{ll}
        V'(x\leq y) & \text{if } x \in G \text{ and } y \in G, \\
        0 & \text{otherwise,}
        \end{array}
        \right.
    \]
    defines an $n$D persistence module $V$. 
    For each natural transformation $f': V'\rightarrow W'$ in $\fun{G}{\vect_K}$,
    \[
        f_x = 
        \left\{
        \begin{array}{ll}
        f'_x & x \in G \\
        0 & \text{otherwise.}
        \end{array}
        \right.
    \]
    defines a morphism $f: V\rightarrow W$, where the $n$D persistence modules $V$ and $W$ are defined from $V'$ and $W'$ as above.
    Moreover, if $f'$ is an injection or a surjection, then so is $f$.
\end{lemma}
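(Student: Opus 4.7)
The plan is to verify, in order, that $V$ is a well-defined functor, that $f$ is a natural transformation, and that pointwise injectivity/surjectivity of $f'$ lifts to $f$. The only structural property of $G = \{-m,\ldots,m\}^n$ that I will invoke is its \emph{order-convexity} inside $(\mathbb{Z}^n,\leq)$: whenever $\vv{x} \leq \vv{y} \leq \vv{z}$ with $\vv{x},\vv{z} \in G$, automatically $\vv{y} \in G$ as well. This is what allows zero-padding to respect composition.

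For functoriality, fix a chain $\vv{x} \leq \vv{y} \leq \vv{z}$ and split into cases. If both $\vv{x},\vv{z} \in G$, convexity forces $\vv{y} \in G$, and the required identity $V(\vv{y} \leq \vv{z})V(\vv{x} \leq \vv{y}) = V(\vv{x} \leq \vv{z})$ reduces to the corresponding commutativity relation for $V'$. If instead $\vv{x} \notin G$ or $\vv{z} \notin G$, then $V(\vv{x} \leq \vv{z}) = 0$ by definition; the composition also vanishes because the defining formula makes $V(\vv{a} \leq \vv{b}) = 0$ whenever either endpoint is outside $G$, so $\vv{x} \notin G$ kills $V(\vv{x} \leq \vv{y})$ and $\vv{z} \notin G$ kills $V(\vv{y} \leq \vv{z})$. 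Finite support is automatic from $\supp V \subseteq G$, so $V \in \rep \mathbb{Z}^n$.

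For naturality of $f$ at a relation $\vv{x} \leq \vv{y}$, I would again case-split on whether the endpoints lie in $G$. If both do, the square $W(\vv{x} \leq \vv{y})f_{\vv{x}} = f_{\vv{y}} V(\vv{x} \leq \vv{y})$ is precisely the $f'$-naturality square. Otherwise, whichever endpoint falls outside $G$ forces the corresponding component of $f$ to zero \emph{and} the corresponding arrow of $V$ (or $W$) to zero, so both sides collapse to $0$. Injectivity and surjectivity are pointwise checks: at $\vv{x} \in G$ we have $f_{\vv{x}} = f'_{\vv{x}}$, which inherits the property from $f'$; at $\vv{x} \notin G$, both $V(\vv{x})$ and $W(\vv{x})$ are zero, so $f_{\vv{x}} = 0$ is trivially bijective.

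I do not anticipate any genuine obstacle; the statement is a bookkeeping exercise. The one substantive input is the observation that order-convexity of $G$ in $\mathbb{Z}^n$ is what prevents zero-padding from violating the commutativity relations---had $G$ been a non-convex finite subposet, one could easily construct $\vv{x} \leq \vv{y} \leq \vv{z}$ with $\vv{x}, \vv{z} \in G$ but $\vv{y} \notin G$, and the composition $V(\vv{y}\leq\vv{z})V(\vv{x}\leq\vv{y}) = 0$ would generically differ from $V'(\vv{x} \leq \vv{z})$.
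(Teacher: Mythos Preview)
Your proposal is correct and follows essentially the same approach as the paper: a case analysis on whether the relevant vertices lie in $G$, with convexity of $G$ as the key ingredient ensuring the composition axiom. The only cosmetic difference is that the paper explicitly mentions the identity axiom $V(\vv{x}\leq\vv{x})=\id$, which you omit but which is trivial in either case.
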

\begin{proof}
We check the functoriality of $V$. It is clear that $V(x\leq x)$ is the identity for all $x\in \mathbb{Z}^n$.
To see that 
$
    V(y \leq z) V(x \leq y) = V(x \leq z)
$
for all $x,y,z,\in \mathbb{Z}^n$,
we note that if either $x$ or $z$ is not in $G$, then both sides of the equation are $0$. If $y$ is not in $G$, then at least one of $x$ or $z$ must also be not in $G$, since $G$ is convex. Thus, if at least one of $x$, $y$, or $z$ is not in $G$, the equality holds. Otherwise, the equality follows from the functoriality of $V'$.

To check that $f$ is a natural transformation, we need to verify the commutativity of
\[
\begin{tikzcd}
  V(x) \rar{V(x\leq y)}\dar{f_x} & V(y) \dar{f_y} \\
  W(x) \rar{W(x\leq y)} & W(y)\mathrlap{.}
\end{tikzcd}
\]
If both $x$ and $y$ are in $G$, then the diagram restricts to one involving $V'$, $W'$, and $f'$, and commutativity follows from the fact that $f'$ is a natural transformation. If $x$ or $y$ is not in $G$, commutativity is immediate as either $V(x)=0$ and $W(x) = 0$, or $V(y) = 0$ and $W(y)=0$.

The fact that $f$ is an injection (a surjection, respectively) if $f'$ is an injection (a surjection, respectively) follows immediately from the definition.
\end{proof}

\begin{theorem}
  \label{thm:nD}
  Let $V$ be an $n$D persistence module with finite support. Then, there exists an indecomposable $(n+1)$D persistence module $M$ with finite support such that $V$ is a hyperplane restriction of $M$.
\end{theorem}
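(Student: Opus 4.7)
The plan is to reduce to Theorem~\ref{thm:rectangles} by finding a surjection onto $V$ from a rectangle-decomposable module with finite support, and then appending $V$ as one additional layer on top of the construction from that theorem.

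The first step is to construct a rectangle-decomposable $n$D persistence module $Q$ with finite support together with a surjection $\pi: Q \twoheadrightarrow V$. Since $V$ has finite support, choose $m > 0$ so that $\supp V \subseteq G = \{-m,\ldots,m\}^n$, and restrict $V$ to the subcategory $G$. In the functor category $\fun{G}{\vect_K}$, a projective cover of $V|_G$ exists, and since $G$ is a finite poset, its indecomposable projectives are representable functors, each of which is a rectangle of the form $\intv[x,(m,\ldots,m)]$ within $G$. The projective cover therefore gives a surjection onto $V|_G$ from a rectangle-decomposable module over $G$. Applying Lemma~\ref{lem:padding} extends this to a rectangle-decomposable $n$D persistence module $Q$ over $\mathbb{Z}^n$ with finite support, together with a surjection $\pi: Q \twoheadrightarrow V$.

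Next, I would apply the construction of Theorem~\ref{thm:rectangles} to the rectangle-decomposable module $Q$, obtaining an $\Af{4}$-valued object
$
S : I_Q \to \widebar{Q} \to Q' \to Q
$
with $\End(S) \cong K$. I would then extend $S$ to an $\Af{5}$-valued object $\widetilde S$ by attaching $V$ on the right via $\pi$:
$
\widetilde S : I_Q \to \widebar{Q} \to Q' \to Q \xrightarrow{\pi} V.
$
Stacking yields an $(n+1)$D persistence module $M$ with finite support, and $V$ is the hyperplane restriction of $M$ along the inclusion $\vv x \mapsto (\vv x, 4)$.

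For indecomposability, I would show $\End(M) \cong K$ directly and invoke Lemma~\ref{lem:indec_endo}. An endomorphism of $\widetilde S$ is a tuple $(\phi_0,\phi_1,\phi_2,\phi_3,\phi_4)$ whose first four components form an endomorphism of $S$; by Theorem~\ref{thm:rectangles} this tuple must equal $(c\cdot\id,c\cdot\id,c\cdot\id,c\cdot\id)$ for some $c \in K$. The commutativity at the appended arrow then gives $\phi_4 \circ \pi = \pi \circ (c\cdot \id_Q) = c\cdot \pi$, and surjectivity of $\pi$ forces $\phi_4 = c\cdot \id_V$. Hence $\End(M) \cong K$ and $M$ is indecomposable. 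The principal obstacle lies in step one: a projective cover of $V$ computed directly over $\mathbb{Z}^n$ would produce rectangles of infinite extent, so one must first work over a finite grid $G$ and pad back via Lemma~\ref{lem:padding}; once the finite-support surjection $\pi$ is in hand, extending the endomorphism-ring computation of Theorem~\ref{thm:rectangles} by a single layer is routine because surjectivity of $\pi$ propagates the scalar from the fourth layer uniquely to $V$.
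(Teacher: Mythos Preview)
Your proposal is correct and follows essentially the same approach as the paper: restrict to a finite grid, take a projective cover to obtain a rectangle-decomposable module with a surjection onto $V$, pad back via Lemma~\ref{lem:padding}, apply the construction of Theorem~\ref{thm:rectangles} to that module, and append $V$ as a fifth layer, using surjectivity to propagate the scalar in the endomorphism ring. Even your remark about the obstacle of infinite-support projectives over $\mathbb{Z}^n$ matches the paper's explicit caveat.
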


\begin{proof}
  Let $V$ be as given. We observe that there exists a finite rectangle-decomposable $n$D persistence module $R$ and a surjection $p:R \twoheadrightarrow V$, by the following.
  Since $V$ has finite support, there exists a finite subgrid $G = \{-m,\hdots,m\}^n \subset \mathbb{Z}^n$ containing the support of $V$. 
  Restrict $V$ to $(G,\leq)$ as $V|_G$.
  Let $R'$ be the projective cover of $V|_G$, and thus we have a surjection $p':R'\twoheadrightarrow V|_G$ in $\fun{G}{\vect_K}$.
  Using Lemma~\ref{lem:padding} to pad $R'$ with zero spaces and zero maps, we obtain the persistence module $R$ on $\mathbb{Z}^n$. Note that $R$ is finite rectangle-decomposable, due to the form of projective representations of the finite commutative grid $(G,\leq)$. Similarly, from $p'$ we obtain the surjection $p: R\twoheadrightarrow V$, as required.
  

  Then, using the construction in the proof of Theorem~\ref{thm:rectangles} applied to 
  $R$, we obtain the representation
  \[
    S:
    \begin{tikzcd}[ampersand replacement=\&, column sep=large]
      I_R \rar{\smat{1\\\vdots\\1}} \&
      \widebar{R} \rar{\smat{1 & & \\ & \ddots & \\ & & 1}} \&
      R' \rar{\smat{1 & & \\ & \ddots & \\ & & 1}} \&
      R.
    \end{tikzcd}
  \]
  We remind the reader that  the ``$1$'''s are not identity maps, but rather the identity $1\in K$ multiplied by the canonical homomorphisms between the corresponding rectangle summands.
  Appending $V$, we obtain %
  \[
    S':
    \begin{tikzcd}[ampersand replacement=\&, column sep=large]
      I_R \rar{\smat{1\\\vdots\\1}} \&
      \widebar{R} \rar{\smat{1 & & \\ & \ddots & \\ & & 1}} \&
      R' \rar{\smat{1 & & \\ & \ddots & \\ & & 1}} \&
      R \arrow[two heads]{r}{p}
      \& V.
    \end{tikzcd}
  \]
  Let us compute $\End(S')$. For each $\phi \in \End(S')$, $\phi$ restricted to the first four entries is an endomorphism of $S$. We recall that it was shown in \cite{buchet2020every} that $\End(S) \cong K$. Thus, $\phi$ is determined by a constant $c \in K$ and a morphism $\phi_V$ such that the following diagram commutes:
  \[
    \begin{tikzcd}[ampersand replacement=\&]
    R \arrow[two heads]{r}{p} \dar[swap]{c 1_{R}} \& V \dar{\phi_V}\\
    R \arrow[two heads]{r}{p} \& V.
    \end{tikzcd}
  \]
  Let $v\in V$, that is, $v$ is an element in one the vector spaces of $V$.
  By the fact that $p$ is surjective, there exists an $r\in R$ such that $p(r) = v$. Thus,
  \[
    \phi_V(v) = \phi_V(p(r)) = p(c 1_R(r)) = c p(r) = cv,
  \]
  from which we conclude that $\phi_V$ is the multiplication by the scalar $c$. Thus $\End(S') \cong K$, and $S'$ is indecomposable.

  Once again by stacking, $S'$ can be viewed as an $(n+1)$D persistence module. The $(n+1)$D persistence module corresponding to $S'$ has finite support, and $V$ is a hyperplane restriction by a natural inclusion $\mathbb{Z}^n \hookrightarrow \mathbb{Z}^{n+1}$.
\end{proof}

The dual construction works similarly.
After restricting $V$ to the finite commutative grid $(G,\leq)$, we obtain the injective envelope $T'$ together with an injection $j':V|_G\rightarrowtail T'$.
Using Lemma~\ref{lem:padding} to pad $T'$ and $j'$ with zeros, we obtain a finite rectangle-decomposable persistence module $T$ together with an injection $j:V\rightarrowtail T$. We then apply the dual construction of Theorem~\ref{thm:rectangles} and obtain the representation:
\[
  S'':
  \begin{tikzcd}[ampersand replacement=\&, column sep=large]
    V \arrow[tail]{r}{j}\& T \rar{\smat{1 & & \\ & \ddots & \\ & & 1}} \&
    T' \rar{\smat{1 & & \\ & \ddots & \\ & & 1}} \&
    \widebar{T}  \rar{\smat{1 & \hdots & 1}}
    \& I_R.
  \end{tikzcd}
\]
As before, this can be viewed as an $(n+1)$D persistence module via stacking.
By an argument dual to the one above, its endomorphism ring is isomorphic to the field $K$.
Hence $S''$ is also indecomposable.

The combination of the two constructions gives birth to the following indecomposable module $H$, which is of interest for our next construction (Theorem~\ref{thm:lotr}).
\begin{equation}
\label{eq:candywrap}
  \begin{tikzcd}[ampersand replacement=\&, column sep = 2.7em]
    I_R \rar{\smat{1\\\vdots\\1}} \&
    \widebar{R} \rar{\smat{1 & & \\ & \ddots & \\ & & 1}} \&
    R' \rar{\smat{1 & & \\ & \ddots & \\ & & 1}} \&
    R \arrow[two heads]{r}{p} \&
    V \arrow[tail]{r}{j}\& T \rar{\smat{1 & & \\ & \ddots & \\ & & 1}} \&
    T' \rar{\smat{1 & & \\ & \ddots & \\ & & 1}} \&
    \widebar{T}  \rar{\smat{1 & \hdots & 1}}
    \& I_T.
  \end{tikzcd}
\end{equation}
The resulting module $H$ has the properties of ``candy modules'' as defined in~\cite{buchet2020every}. Namely,
\begin{enumerate}
\item the support of $H$ is contained in a hyperrectangle, with the lower right corner and the upper left corner supporting a one dimensional vector space, and
\item the endomorphism ring of $H$ is isomorphic to $K$.
\end{enumerate}

The idea is that $V$ is wrapped in a ``candy wrapper'' with ``handles''. 
The lower right and upper left corners are the handles and prove to be convenient for stringing along with other candies.
While the notion of lower right and upper left corners is obvious for $2$D persistence modules, we need to provide a definition in the $(n+1)$D case for $n > 1$.

First, we set as a privileged coordinate axis the one along which we stack the $n$D persistence modules.
We fix it to be the last coordinate.
The ``upper left corner'' is the vertex of the containing hyperrectangle for whose all coordinates are minimized except the last one which is maximized. On the other hand, the ``lower right corner'' is the vertex where all coordinates are maximized except for the last one which is minimized.
Taking the last coordinate as
height,
these two corners geometrically corresponds to the minimal element of the top facet and the maximal element of the bottom facet, as illustrated in Figure~\ref{fig:corners}.

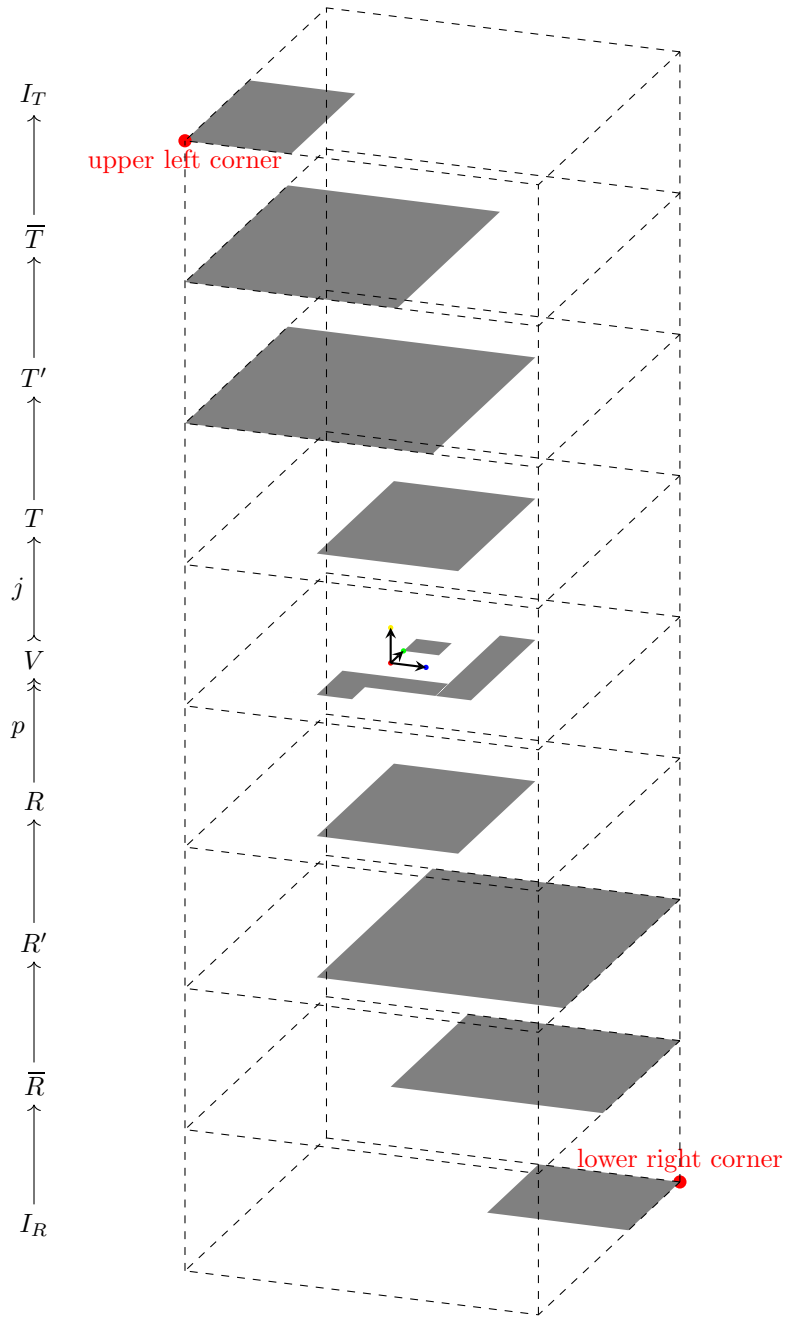
\begin{figure}
\begin{center}
\tdplotsetmaincoords{70}{20}
\begin{tikzpicture}[tdplot_main_coords, scale=.5]
    \fill[gray] (-2,-3,0) -- (-1,-3,0) -- (-1,-1,0) -- (2,-1,0) -- (2,-2,0) -- (3,-2,0) -- (3,3,0) -- (2,3,0) -- (2,-2,0) -- (0,-2) -- (0,-3,0) -- (-2,-3,0);
    \fill[gray] (1,2,0) -- (1,1,0) -- (0,1,0) -- (0,2,0) -- (1,2,0);
    
    \fill[gray] (-1,-3,4) -- (3,-3,4) -- (3,3,4) -- (-1,3,4);
    
    \fill[gray] (-4,-5,8) -- (3,-5,8) -- (3,3,8) -- (-4,3,8);
    
    \fill[gray] (-4,-5,12) -- (-4,3,12) -- (2,3,12) -- (2,-5,12) -- (-4,-5,12);
    
    \fill[gray] (-4,-5,16) -- (-4,0,16) -- (-1,0,16) -- (-1,-5,16) -- (-4,-5,16);
    
    \fill[red] (-4,-5,16) circle (5pt) node[below] {upper left corner};
    \fill[red] (6,6,-16) circle (5pt) node[above] {lower right corner};
    
    \fill[gray] (-1,-3,-4) -- (3,-3,-4) -- (3,3,-4) -- (-1,3,-4);
    
    \fill[gray] (-1,-3,-8) -- (6,-3,-8) -- (6,6,-8) -- (-1,6,-8);
    
    \fill[gray] (0,0,-12) -- (6,0,-12) -- (6,6,-12) -- (0,6,-12);
    
    \fill[gray] (2,2,-16) -- (6,2,-16) -- (6,6,-16) -- (2,6,-16);
    
    \draw[thin,black,dashed] (6,6,-16) -- (6,6,16) (-4,-5,-16) -- (-4,-5,16) (-4,6,-16) -- (-4,6,16) (6,-5,-16) -- (6,-5,16);
    \draw[thin,black,dashed] (6,6,0) -- (-4,6,0) -- (-4,-5,0) -- (6,-5,0) -- (6,6,0);
    \draw[thin,black,dashed] (6,6,4) -- (-4,6,4) -- (-4,-5,4) -- (6,-5,4) -- (6,6,4);
    \draw[thin,black,dashed] (6,6,8) -- (-4,6,8) -- (-4,-5,8) -- (6,-5,8) -- (6,6,8);
    \draw[thin,black,dashed] (6,6,12) -- (-4,6,12) -- (-4,-5,12) -- (6,-5,12) -- (6,6,12);
    \draw[thin,black,dashed] (6,6,16) -- (-4,6,16) -- (-4,-5,16) -- (6,-5,16) -- (6,6,16);
    \draw[thin,black,dashed] (6,6,-4) -- (-4,6,-4) -- (-4,-5,-4) -- (6,-5,-4) -- (6,6,-4);
    \draw[thin,black,dashed] (6,6,-8) -- (-4,6,-8) -- (-4,-5,-8) -- (6,-5,-8) -- (6,6,-8);
    \draw[thin,black,dashed] (6,6,-12) -- (-4,6,-12) -- (-4,-5,-12) -- (6,-5,-12) -- (6,6,-12);
    \draw[thin,black,dashed] (6,6,-16) -- (-4,6,-16) -- (-4,-5,-16) -- (6,-5,-16) -- (6,6,-16);
    
    \fill[red] (0,0,0) circle (2pt);
    \fill[blue] (1,0,0) circle (2pt);
    \fill[green] (0,1,0) circle (2pt);
    \fill[yellow] (0,0,1) circle (2pt);
    \draw[-stealth, black, thick] (0,0,0) -- (0,0,1);
    \draw[-stealth, black, thick] (0,0,0) -- (1,0,0);
    \draw[-stealth, black, thick] (0,0,0) -- (0,1,0);
    
    \node (V) at (-9,-3,0) {$V$};
    \node (T) at (-9,-3,4) {$T$};
    \node (Tp) at (-9,-3,8) {$T'$};
    \node (Tb) at (-9,-3,12) {$\widebar T$};
    \node (It) at (-9,-3,16) {$I_T$};
    \node (R) at (-9,-3,-4) {$R$};
    \node (Rp) at (-9,-3,-8) {$R'$};
    \node (Rb) at (-9,-3,-12) {$\widebar R$};
    \node (Ir) at (-9,-3,-16) {$I_R$};
    
    \draw[>->] (V.north) -- node[left] {$j$} (T.south);
    \draw[->] (T.north) -- (Tp.south);
    \draw[->] (Tp.north) -- (Tb.south);
    \draw[->] (Tb.north) -- (It.south);
    \draw[->>] (R.north) -- node[left] {$p$} (V.south);
    \draw[->] (Ir.north) -- (Rb.south);
    \draw[->] (Rb.north) -- (Rp.south);
    \draw[->] (Rp.north) -- (R.south);
    
\end{tikzpicture}
\caption{Support of a 3D module obtained by our construction with its two corners marked.}\label{fig:corners}
\end{center}
\end{figure}

Given two $n$D candy modules $A$ and $B$, we can define the concatenation operation $A\circ B$ along the last two dimensions as was done for $2$D modules.
First translate the module $B$ so that the lower right corner of $A$ and the upper left corner of $B$ coincide.
Then move $B$ by a translation of $-1$ in the last coordinate and $+1$ in the second-to-last coordinate.
We then add one vertex $x$ supporting the one dimensional vector space $K$ at the immediate left of $B$, id est the vertex with coordinates given by the upper left corner of the translated $B$ plus $-1$ in the second-to-last coordinate.
We then add two arrows supporting isomorphisms from $K$ at $x$ to the one dimensional vector spaces at the lower right corner of $A$ and at the upper left corner of $B$.

Projecting the support of the modules to the hyperplane containing both the lower right corner of $A$ and the upper left corner of $B$, we obtain the picture in Figure~\ref{fig:concat2d}.
Note that the support of $B$ is located beyond this hyperplane while the support of $A$ is in front.
\begin{figure}
  \newcommand{\drawprofile}[8]{
    \def\hh{0.5}
    \foreach \b/\d [count=\level from 0] in {0/#1, 0/#2, #3/#4}{
      \pgfmathsetmacro{\z}{\hh * -\level}
      \pgfmathsetmacro{\zz}{\hh * -(\level+1)}
      \draw[fill=gray!75, draw=gray!75] (\b,\z) rectangle (\d,\zz);
    }
    \foreach \b/\d [count=\level from 4] in {#3/#4, #6/#5, #7/#5} {
      \pgfmathsetmacro{\z}{\hh * -\level}
      \pgfmathsetmacro{\zz}{\hh * -(\level-1)}
      \draw[fill=gray!75, draw=gray!75] (\b,\z) rectangle (\d,\zz);
    }
    \pgfmathsetmacro{\z}{\hh * -6}
    \pgfmathsetmacro{\zz}{\hh * -7}
  }
  \begin{center}
  \begin{tikzpicture}[scale=0.5]
    \tikzset{dotstyle/.style={circle,fill=black,inner sep=0pt,minimum size=3pt}}
    \begin{scope}[shift={(0,0)},scale=1]
      \drawprofile{2}{3}{1.5}{4}{6}{3}{4}{$A$}
    \end{scope}
    \draw[-stealth,red!75, thick] (6, -4) -- (6,-3)node[anchor=west,yshift=2pt]{$r_A$};
    \draw[-stealth,red!75, thick] (6, -4)node[dotstyle,fill=red!75]{}-- (7,-4) node[anchor=south,xshift=5pt]{$l_B$};
    \node [red,anchor=north east] at(6,-4){$x$};
    \begin{scope}[shift={(7,-4)},scale=1]
      \drawprofile{2}{3}{1.5}{4}{6}{3}{4}{$B$}
    \end{scope}
  \end{tikzpicture}
  \caption{Projection of the concatenation operation}\label{fig:concat2d}
  \end{center}
\end{figure}

The concatenation lemma~\cite[Lemma~6.1]{buchet2020every} generalizes straightforwardly to the higher dimensional case, as below.

\begin{lemma}\label{lem:concatenation}
    Given $A$ and $B$ two $n$D candy modules, $A\circ B$ is an $n$D candy module.
\end{lemma}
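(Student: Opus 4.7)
The plan is to verify, for $A\circ B$, the two defining properties of a candy module: that its support lies in a hyperrectangle whose upper left and lower right corners carry one-dimensional vector spaces, and that $\End(A\circ B)\cong K$.

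For the support property, the support of $A\circ B$ is by construction the disjoint union of $\supp(A)$, the translated $\supp(B)$, and the single vertex $x$. This union is contained in a hyperrectangle whose upper left corner is the upper left corner of $A$ and whose lower right corner is the lower right corner of the translated $B$. These corners are not touched by the gluing, so they inherit their one-dimensional vector spaces directly from the candy structures of $A$ and $B$.

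The substantive step is the endomorphism computation. I would first establish the key geometric fact that every $a\in\supp(A)$ and every $b$ in the translated $\supp(B)$ are incomparable in $(\mathbb{Z}^n,\leq)$: the shift by $+1$ in the second-to-last coordinate yields $b^{n-1}>a^{n-1}$, while the shift by $-1$ in the last coordinate yields $b^n<a^n$. A parallel check shows that $x$ admits comparisons only to vertices of $A$ lying above $r_A$ (i.e.\ with the same non-last coordinates as $r_A$) and to vertices of $B$ lying to the right of $l_B$ (with the same last coordinate as $l_B$); in each such case the resulting composite map of $A\circ B$ factors through $r_A$ or through $l_B$ respectively, so no internal map of $A\circ B$ imposes a constraint not already present in $A$, in $B$, or along the two new arrows through $x$.

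Given this, pick any $\phi\in\End(A\circ B)$. It restricts to endomorphisms of $A$ and of $B$, which by the candy property of $A$ and $B$ are scalar multiplications $c_A\cdot\id$ and $c_B\cdot\id$ for some $c_A,c_B\in K$, and at $M(x)\cong K$ it is multiplication by some $c_x\in K$. Naturality of $\phi$ with respect to the isomorphism $M(x)\to M(r_A)$ forces $c_x=c_A$, and naturality with respect to $M(x)\to M(l_B)$ forces $c_x=c_B$. Hence $c_A=c_B=c_x$ and $\phi$ is multiplication by a single scalar, so $\End(A\circ B)\cong K$. The only technical obstacle is the incomparability verification; once that is in place, the endomorphism argument is a direct transcription of the $2$D case in~\cite{buchet2020every}.
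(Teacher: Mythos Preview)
Your proposal is correct and follows essentially the same approach as the paper's proof: both establish that the supports of $A$ and (the translated) $B$ are mutually incomparable and that nothing maps into $x$, then use restriction of an endomorphism to $A$, $B$, and $K$ at $x$ together with the two isomorphisms out of $x$ to force a single scalar. You are simply more explicit than the paper about the coordinate-level verification of the incomparabilities and about which vertices $x$ is comparable to.
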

\begin{proof}
    The proof follows the exact same 
    line of reasoning
    as the $2$D case.
    
    Below, by $A$ and $B$ we mean their copies inside $A \circ B$.
    There are no paths from any vertex in the support of
    $A$ to any vertex in the support of $B$ due to the positions chosen for the two inside $A \circ B$.
    Similarly, there are no paths from $B$ to $A$.
    Furthermore, there are no paths from $A$ or $B$ to the vertex $x$.
    Hence no commutativity relations need to be checked, other than obvious $0=0$ ones and the ones internal to $A$ and $B$.
    Thus $A\circ B$ is indeed a well-defined $n$D persistence module.

    The upper left corner of 
    $A\circ B$
    coincides with the upper left corner of $A$, while
    the lower right corner of $A\circ B$ coincides with that of $B$.
    By hypothesis, both are one dimensional vector spaces.

    Finally, any endomorphism of $A\circ B$ 
    restricts to an endomorphism of $A$, an endomorphism of $B$, and of $K$ (at vertex $x$).
    Since $A$, $B$, and $K$ each have endomorphism ring isomorphic to $K$, all three of these restrictions are each uniquely determined by a scalar in $K$. As they cover the support of $A\circ B$, they are enough to uniquely determine the global endomorphism.
    Moreover, since the two arrows starting from $x$ in the support of $A\circ B$ support isomorphisms, the corresponding commutativity relations for endomorphisms 
    ensure that the three scalars are fully determined by the choice of one of them.
    Therefore the endomorphism of $A \circ B$ is isomorphic to $K$, and $A\circ B$ is indeed a candy module.
\end{proof}

\begin{theorem}
  \label{thm:lotr}
  Let $K$ be a countable field. There exists a pointwise finite dimensional indecomposable $(n+1)$D persistence module $M$ with coefficients in $K$ such that any finite dimensional $n$D persistence module $V$ with coefficients in $K$ is a hyperplane restriction of $M$.
\end{theorem}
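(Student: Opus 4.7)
The plan is to enumerate the isomorphism classes of finite dimensional $n$D persistence modules and wrap each of them individually in a candy, then concatenate all of these candies into a single $(n+1)$D persistence module using the operation $\circ$.

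First I would observe that since $K$ is countable, the set of isomorphism classes of finite dimensional $n$D persistence modules is countable. Indeed, up to translation there are only countably many finite subgrids of $\mathbb{Z}^n$ that can serve as support, and once such a finite subgrid is fixed a persistence module with support contained in it is determined by finitely many finite dimensional $K$-vector spaces and finitely many $K$-linear maps between them, so up to isomorphism these form a countable set when $K$ is countable. Let $V_1, V_2, V_3, \ldots$ be an enumeration.

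Next, for each $V_i$ I would apply the construction that yields the module $H_i$ of Equation~\eqref{eq:candywrap}, producing a candy module with $V_i$ appearing as a hyperplane restriction situated on the distinguished ``middle'' layer of $H_i$. By the argument accompanying Equation~\eqref{eq:candywrap}, each $H_i$ has one dimensional upper-left and lower-right corners and satisfies $\End(H_i)\cong K$, so it is a bona fide candy module in the sense recalled before Lemma~\ref{lem:concatenation}. I would then define
\[
  M \defeq H_1 \circ H_2 \circ H_3 \circ \cdots
\]
as an infinite iterated concatenation, where at each step the next candy is translated so that its upper-left corner meets the shifted lower-right corner of the previous one, with a bridging vertex $x_i$ carrying $K$ and two isomorphisms pointing to the adjacent corners, exactly as in the binary concatenation. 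Because each concatenation step shifts the new candy strictly in the last two coordinates, the supports of the $H_i$ and the bridging vertices $x_i$ inside $\mathbb{Z}^{n+1}$ are pairwise disjoint and any point of $\mathbb{Z}^{n+1}$ lies in the support of at most one $H_i$ (or equals some $x_i$). This gives pointwise finite dimensionality of $M$ immediately, and each $V_i$ is still a hyperplane restriction of $M$ via the hyperplane inclusion used for $H_i$ inside $M$.

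The main step, and the one I expect to require the most care, is showing $\End(M)\cong K$. The proof of Lemma~\ref{lem:concatenation} generalizes to the infinite setting: any endomorphism $\phi$ of $M$ restricts to an endomorphism of each $H_i$ and of each $K$ sitting at the bridging vertex $x_i$. Since $\End(H_i)\cong K$ and $\End(K)\cong K$, each such restriction is multiplication by some scalar $c_i$ or $c_{x_i}\in K$. The two isomorphisms emanating from $x_i$ impose commutativity relations that force $c_{x_i}$ to agree with the scalar on the lower-right corner of $H_i$ and with the scalar on the upper-left corner of $H_{i+1}$; hence all scalars along the chain coincide with a single $c\in K$. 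Therefore $\phi = c\cdot 1_M$, so $\End(M)\cong K$ and $M$ is indecomposable by Lemma~\ref{lem:locality}, as required.
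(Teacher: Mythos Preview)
Your outline matches the paper's approach for the countability argument, the candy-wrapping of each $V_i$ via Equation~\eqref{eq:candywrap}, and the infinite concatenation. The one substantive difference is in how indecomposability of the resulting infinite module $M$ is established.

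The paper explicitly avoids invoking Lemma~\ref{lem:locality} at this point, noting that it is stated for finite bound quivers and ``does not directly apply here.'' Instead, it argues by contradiction: if $M = M_1 \oplus M_2$ with both summands nonzero, pick vertices $(\vv a,i)$ and $(\vv b,j)$ in the supports of $M_1$ and $M_2$ respectively, and restrict $M$ to a finite horizontal slab $G$ containing both heights and bounded above and below by bridging-vertex layers. Then $M|_G$ is a finite concatenation of candies, hence itself a candy module by Lemma~\ref{lem:concatenation}, hence indecomposable; but $M|_G = M_1|_G \oplus M_2|_G$ is a nontrivial decomposition, a contradiction.

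Your route---directly extending the argument of Lemma~\ref{lem:concatenation} to the infinite chain to conclude $\End(M)\cong K$---is also valid, and arguably more direct: the restrictions of $\phi$ to each $H_i$ and to each bridging vertex are well-defined because the supports are mutually incomparable except through the bridging arrows, and the bridging isomorphisms propagate a single scalar through the whole chain exactly as you describe. The only wrinkle is your final appeal to Lemma~\ref{lem:locality}: since that lemma as cited is only for finite bound quivers, you should not invoke it here. But you don't need it: once you have $\End(M)\cong K$, indecomposability is immediate, since a field contains no idempotents other than $0$ and $1$, so $M$ admits no nontrivial direct-sum decomposition. With that one-line replacement your argument is complete. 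The paper's restriction-to-a-finite-slab argument has the virtue of reusing Lemma~\ref{lem:concatenation} verbatim; yours has the virtue of computing $\End(M)$ outright.
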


\begin{proof}
    Given a countable family of finite dimensional (i.e.~finite total dimension) $n$D persistence modules , we can build an infinite $(n+1)$D persistence module containing all of them as hyperplane restrictions. 
    First, for each $n$D persistence module $M$ with finite total dimension in the family, build an $(n+1)$D candy module containing it as a hyperplane restriction, using the construction for Theorem~\ref{thm:nD}, and its dual, as in \eqref{eq:candywrap}.
    Then using the concatenation operation repeatedly, we build the infinite $(n+1)D$ persistence module containing all of them as hyperplane restrictions.
    
    First we claim that the resulting persistence nodule, albeit potentially non-finite, is indecomposable.
    Note that Lemma~\ref{lem:locality} does not directly apply here.
    Let us assume that $M$ is obtained by stringing a countable number of candy modules and that $M$ is decomposable into two non-zero modules $M_1$ and $M_2$.
    Then there exist vertices $(\vv a, i)$ such that $M_1(\vv a, i)\neq0$ and $(\vv b, j)$ such that $M_2(\vv b, j)\neq 0$.
    Without loss of generality, assume that $i\leq j$.
    Let $m_1-1$ be the height of the first extra vertex $x$ added in the 
    concatenation operation that lies strictly below $i$.
    If no such vertex exists, then there exists a lowest layer in the support of $M$ and let $m_1$ be the height of this layer.
    Similarly, let $m_2$ be the height of the first extra vertex $x$ that lies strictly above $j$ or the height of the highest layer of the support of $M$.
    Let $G$ be the commutative grid comprised of all layers with heights between $m_1$ and $m_2$, inclusive.
    Note that $M|_G$ is a finite candy module and therefore is indecomposable by Lemma~\ref{lem:concatenation}.
    However, $M|_G = M_1|_G \oplus M_2|_G$, and both $M_1|_G$ and $M_2|_G$ are non-zero so $M|_G$ is decomposable.
    This contradiction implies that $M$ must be indecomposable.

    It remains to show that the set of isomorphism classes of finite dimensional $n$D persistence modules with coefficients in $K$ is countable.
    First, let us fix the total dimension $d$.
    Then the number of possible dimension vectors of $n$D persistence modules with total dimension $d$ is countable.
    For each of these dimension vectors, a persistence module is defined up to isomorphism by the entries of the matrices representing its non-zero internal linear maps.
    For each persistence module under consideration, the number of its non-zero internal linear maps is finite, and the values each entry can take comes from $K$, a countable set.
    
    Thus, up to isomorphism, there is only a countable number of persistence modules with the given dimension vector.
    By taking the union over all possible total dimensions 
    $d$,
    we conclude that the set of isomorphism classes of $n$D persistence modules with finite total dimension is countable, and we can build the huge indecomposable $(n+1)$D persistence module that contains them all as hyperplane restrictions.
\end{proof}

Note that for support grids large enough (and with $n \geq 2$), the assumption that $K$ is countable is both necessary and sufficient for the number of isomorphism classes of finite total dimension $n$D persistence modules over $K$ to be countable.
Indeed, as soon as the commutative grid is representation infinite, we can build at least one indecomposable module for each element of $K$ such that no two of them are isomorphic (see construction in~\cite{buchet_et_al:socg}). This is possible as soon as the grid is of size at least $2$ by $5$ or $3$ by $3$.

On the contrary, in dimension $n=1$, the assumption on $K$ is not needed as every indecomposable (i.e. interval) is uniquely defined by its support, up to isomorphism. There are only a countable number of intervals and every $1$D persistence module with total finite dimension is isomorphic to a direct sum of a finite number of intervals. Hence there is only a countable number of them \cite[Lemma~6.2]{buchet2020every}.

\section{Minimality}
Here, we revisit the case $n=1$.
The construction presented in~\cite{buchet2020every} is not minimal in the size of the grid.
While there are several ways to define minimality for realizing a $1$D persistence module as a line restriction of an indecomposable $2$D persistence module, we consider the number of $1$D persistence modules that need to be stacked, in other words the number of layers of the indecomposable $2$D persistence module. We start with the following Lemma.

\begin{lemma}\label{lem:separator}
Let $M$ be a persistence module over the $m\times 2$ commutative grid such that there exist $\vv x\leq\vv y\leq\vv z$ with $M(\vv x)\neq 0$, $M(\vv y)=0$ and $M(\vv z)\neq 0$.
Then $M$ is decomposable.
\end{lemma}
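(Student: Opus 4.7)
The plan is to construct an explicit decomposition $M = M_1 \oplus M_2$ with both summands non-zero. Write $\vv{y} = (a, b)$ with $b \in \{0, 1\}$, denote $V_{i, j} \defeq M(i, j)$, and write the internal maps as $f_{i, j} \colon V_{i, j} \to V_{i+1, j}$ (horizontal) and $g_i \colon V_{i, 0} \to V_{i, 1}$ (vertical). The proof splits into two dual cases; I sketch $b = 0$ in detail, with $b = 1$ handled by the symmetry $(i, j) \mapsto (m+1-i, 1-j)$ that reverses the poset order.

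Suppose $V_{a, 0} = 0$. For $i \leq a$ define $K_i \defeq \Ker\bigl(V_{i, 1} \to V_{a, 1}\bigr)$, the kernel of the horizontal composition in the top row. Commutativity combined with the factoring of $V_{i, 0} \to V_{a, 1}$ through $V_{a, 0} = 0$ gives $\Ima(g_i) \subseteq K_i$ for $i < a$. The family $(K_i)_{i \leq a}$ is a sub-persistence module of the restricted top row $V_{1, 1} \to \cdots \to V_{a, 1}$, and I claim it is in fact a direct summand: choose any complement $C_1$ of $K_1$ in $V_{1, 1}$, and inductively pick $C_{i+1}$ to be a complement of $K_{i+1}$ in $V_{i+1, 1}$ containing $f_{i, 1}(C_i)$. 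This extension is legitimate because any $v \in C_i$ with $f_{i, 1}(v) \in K_{i+1}$ would satisfy $v \in K_i$, contradicting $C_i \cap K_i = 0$.

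With $(C_i)_{i \leq a}$ at hand, define $M_1(i, 0) = V_{i, 0}$ for $i < a$ and $0$ otherwise, $M_1(i, 1) = K_i$ for $i \leq a$ and $0$ otherwise, and let $M_2$ be the pointwise complement: $M_2(i, 0) = V_{i, 0}$ for $i \geq a$ and $0$ otherwise, $M_2(i, 1) = C_i$ for $i \leq a$ and $V_{i, 1}$ for $i > a$. The inclusions $\Ima(g_i) \subseteq K_i$ and $f_{i, 1}(C_i) \subseteq C_{i+1}$, together with $f_{a-1, 0}(V_{a-1, 0}) \subseteq V_{a, 0} = 0$, show that $M_1$ and $M_2$ are sub-persistence modules of $M$ and that $M = M_1 \oplus M_2$ vertex by vertex. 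For non-triviality: $\vv{x} \leq (a, 0)$ with $M(\vv{x}) \neq 0$ and $V_{a, 0} = 0$ forces $\vv{x} = (x^1, 0)$ with $x^1 < a$, hence $M_1(\vv{x}) \neq 0$; dually $\vv{z}$ contributes a non-zero vector to $M_2$.

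The dual case $b = 1$ proceeds symmetrically, using the image $U_i \defeq \Ima\bigl(V_{a, 0} \to V_{i, 0}\bigr)$ in the bottom row for $i \geq a$ (which lies in $\Ker(g_i)$ by the dual commutativity argument) in place of $K_i$, and constructing compatible complements $W_i$ of $U_i$ in $V_{i, 0}$ for $i > a$ via an analogous forward induction. The main technical subtlety in either case is the direct-summand property of $(K_i)$ or $(U_i)$: although a sub-module of a $1$D persistence module is not a direct summand in general, these specific kernel-of-transport and image-of-transport sub-modules do admit compatible complements via the forward induction described.
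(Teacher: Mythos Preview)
Your argument for the case $b=0$ is correct and gives a clean, self-contained decomposition without invoking the interval decomposition of $1$D persistence modules. This is genuinely different from the paper's proof, which decomposes each row into intervals, groups them as $L=L_1\oplus L_2\oplus L_3$ and $U=U_1\oplus U_2\oplus U_3$ according to where they sit relative to the column of $\vv y$, and then observes that $\Hom(L_i,U_j)=0$ for $i\neq j$, so the vertical map is block-diagonal. The paper's route is shorter once the structure theorem is granted; yours trades that for an explicit linear-algebraic construction of complements.

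There is, however, a real gap in your handling of $b=1$. First, the symmetry $(i,j)\mapsto(m{+}1{-}i,1{-}j)$ reverses the poset, so composing $M$ with it yields a \emph{contravariant} functor; to return to persistence modules you must also apply vector-space duality, which you do not mention. Second, and more substantively, the ``analogous forward induction'' for the images $U_i=\Ima(V_{a,0}\to V_{i,0})$ does not go through: the key step in the kernel case was that $f_{i,1}(v)\in K_{i+1}$ forces $v\in K_i$, but the image analogue ($f_{i,0}(w)\in U_{i+1}\Rightarrow w\in U_i$) fails whenever $\Ker f_{i,0}\not\subseteq U_i$. Concretely, with $V_{a,0}=K$, $V_{a+1,0}=K^2$, $V_{a+2,0}=K$, $f_{a,0}(v)=(v,0)$ and $f_{a+1,0}(v,w)=v$, one has $U_{a+2}=K$ so $W_{a+2}=0$, yet the forward choice $W_{a+1}=\{(t,t):t\in K\}$ gives $f_{a+1,0}(W_{a+1})=K\not\subseteq 0$. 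The fix is easy: run the induction \emph{backward} from $i=m$ to $i=a$, using that $f_{i,0}^{-1}(W_{i+1})+U_i=V_{i,0}$ (which follows from $f_{i,0}(U_i)=U_{i+1}$) to find $W_i\subseteq f_{i,0}^{-1}(W_{i+1})$ complementary to $U_i$; alternatively, carry out the duality reduction to $b=0$ properly.
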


\begin{proof}
We denote $L$ the restriction of $M$ to the first row and $U$ its restriction to the upper row.
Both $L$ and $U$ are $1$D persistence modules and can be decomposed into direct sums of intervals.
We separate those intervals into three groups for each row and show that they allow us to find a non-trivial decomposition of $M$.
We treat the case where $\vv y =(y,1)$ is located on the lower row (height $1$) below. The case of $\vv y$ on the upper row follows by a dual construction.

Since $M(\vv y) = 0$, no intervals of $L$ can be nonzero at $y$. Thus, the intervals constituting $L$ can be separated into two groups $L_1$ and $L_3$ depending on whether they are located completely to the left or to the right of $y$.
Denoting $L=\bigoplus\intv[b_i,d_i]$, we let $L_1$ be the direct sum of all intervals in $L$ with $d_i<y$ and $L_3$ the direct sum of all intervals with $b_i>y$.
Additionally, we fix $L_2=0$ and obtain $L = L_1\oplus L_2\oplus L_3$.
For the upper row, we separate the intervals depending on their death time $d_i$.
$U_1$ will be the direct sum of the intervals with $d_i<y$, $U_2$ the direct of intervals ending at $y$,
and $U_3$ the direct sum of intervals ending at $d_i>y$.
We get $U=U_1\oplus U_2\oplus U_3$.

Note that in the dual case with $\vv y$ on the upper row, we have instead $U_1$ all intervals of $U$ to the left of $y$, $U_2 = 0$, $U_3$ all intervals to the right of $y$, and separate the intervals of $L$ by comparing their birth times with $y$.

In either case, the crucial point is that there exist no non-zero homomorphisms
from $L_i$ to $U_j$ when $i\neq j$.
Using matrix notation, there exist two matrices $A$ and $B$ such that:
\begin{equation}
      M:\begin{tikzcd}[ampersand replacement=\&, column sep=large]
        L_1\bigoplus L_2 \bigoplus L_3\rar{\smat{A & 0 & 0 \\ 0 & 0 & 0\\ 0 & 0 & B}} \& U_1\bigoplus U_2\bigoplus U_3
      \end{tikzcd}
\end{equation}
This immediately decomposes into:
\begin{equation}
  \label{eq:twogriddecompo}
  M = 
  \left(
      \begin{tikzcd}[ampersand replacement=\&, column sep=large]
        L_1\rar{A} \&U_1
    \end{tikzcd}
    \right)
    \hspace{1em}\bigoplus\hspace{1em}
    \left(
    \begin{tikzcd}[ampersand replacement=\&, column sep=large]
        L_2\rar{0} \& U_2
        \end{tikzcd}
    \right)
    \hspace{1em}\bigoplus\hspace{1em} 
    \left(
    \begin{tikzcd}[ampersand replacement=\&, column sep=large]
        L_3\rar{B} \& U_3
        \end{tikzcd}
    \right)
\end{equation}

Since $M(\vv x) \neq 0$ and $\vv x \leq \vv y$, at least one interval of $L_1$
(one interval of $L_1$, $U_1$, or $L_2$ in the dual case) must be nonzero.
Similarly, since
$M(\vv z) \neq 0$ and $\vv y \leq \vv z$, at least one interval of $L_2$, $U_2$, or $U_3$ (one interval of $U_3$ in the dual case) must be nonzero.
In any case, the decomposition of $M$ in Equation~\eqref{eq:twogriddecompo} is non-trivial, and thus  $M$ is decomposable.
\end{proof}

  A simpler way to state the result of Lemma~\ref{lem:separator} is to say that the support of any indecomposable persistence module on the $m\times 2$ commutative grid is convex.
  Note that this does not hold in general for the $2$D commutative grid. For example, there exist indecomposables whose supports contain holes \cite[Section~5]{buchet2020every}.

\begin{theorem}
\label{thm:minimal}
Any $1$D persistence module can be realized as the line restriction of some indecomposable $2$D persistence module by stacking three $1$D persistence modules, and there exists a $1$D persistence module for which stacking two $1$D persistence modules is not sufficient.
\end{theorem}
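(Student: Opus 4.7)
The plan is to prove the two halves of the theorem separately: a constructive upper bound giving three layers for any $V$, followed by a single counterexample ruling out two layers.

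For the upper bound, given $V = \bigoplus_{i=1}^m \intv[b_i,d_i]$, the plan is to take the four-layer chain $I_V \to \widebar V \to V' \to V$ of Theorem~\ref{thm:rectangles} and compress it to three layers by exploiting the freedom of non-adjacent line restrictions noted in the paragraph after Definition~\ref{defn:hyperplane_restriction}. Concretely, first run the separate-and-shift and verticalization procedures to obtain $V' = \bigoplus_i \intv[b_i,d'_i]$, $\widebar V = \bigoplus_i \intv[b'_i,d'_i]$, and the cone $I_V$, and form the three-layer stack $I_V \to \widebar V \to V'$ with the same canonical diagonal vertical maps as in Diagram~\eqref{eq:rectS}. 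Then choose the line $L$ so that it runs across the top row $V'$, but skips the ``tail'' positions $x$ where $V'(x) \neq V(x)$ (the extra support introduced by the shift $d'_i \geq d_i$). The definition of separate-and-shift ensures that on the retained positions the internal maps of $V'$ match those of $V$, so $ML \cong V$.

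For indecomposability of this three-layer $M$, the plan is to recycle the endomorphism argument of Theorem~\ref{thm:rectangles}, simply truncated to three layers. An arbitrary $\phi \in \End(M)$ restricts to $(\phi_0,\phi_1,\phi_2)$ on $(I_V,\widebar V,V')$. Commutativity with the canonical diagonal map $\widebar V \to V'$ forces the off-diagonal entries of $\phi_2$ to vanish and identifies its diagonal scalars with those of $\phi_1$ summand by summand; commutativity with $I_V \to \widebar V$ then equates every diagonal scalar of $\phi_1$ with the single scalar of $\phi_0$. Hence $\End(M) \cong K$, and by Lemma~\ref{lem:locality}, $M$ is indecomposable.

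For the lower bound, the plan is to exhibit $V = \intv[0,0]\oplus\intv[2,2]$ and apply Lemma~\ref{lem:separator}. Suppose for contradiction that some two-layer indecomposable $M$ and line $L$ satisfy $ML \cong V$. Since $V(0) \cong K$, $V(1) = 0$, and $V(2) \cong K$, the points $L(0) \leq L(1) \leq L(2)$ in $\mathbb{Z}^2$ must satisfy $M(L(0)) \neq 0$, $M(L(1)) = 0$, and $M(L(2)) \neq 0$. Viewing $M$ as a persistence module over the $m \times 2$ commutative grid on which it is supported, Lemma~\ref{lem:separator} with $\vv x = L(0)$, $\vv y = L(1)$, $\vv z = L(2)$ forces $M$ to be decomposable, contradicting the assumption.

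The main obstacle is in Part~1: one must verify that the particular choices of shift parameters produced by separate-and-shift simultaneously (i) allow $V$ to be read off $V'$ via some non-adjacent line restriction, and (ii) give $V'$ summands whose pairwise Hom structure is exactly rigid enough that the three-layer commutativity relations leave only a single free scalar, even though the fourth ``$V$-layer'' that enforced one of these relations in Theorem~\ref{thm:rectangles} has been removed.
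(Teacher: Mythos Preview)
Your lower-bound argument is correct and matches the paper's.

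For the upper bound, your approach diverges from the paper's and contains a genuine gap. The paper keeps $V$ itself as the top layer, building a three-layer stack $\hat S: I'_V \to V'' \to V$ with a new intermediate module $V''$ whose intervals are nested ($b'_1 < \cdots < b'_l \leq d'_l < \cdots < d'_1$); recovering $V$ is then trivial, at the cost that $\End(\hat S)$ is only local (not $\cong K$), which the paper handles by a block-lower-triangular analysis of $\phi_3$. You instead discard $V$ and keep the bottom three layers $I_V \to \widebar V \to V'$ of the four-layer construction. Your endomorphism argument is in fact correct and does give $\End \cong K$; the problem is recovering $V$ from the top layer $V'$.

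Your rule ``skip the positions $x$ where $V'(x)\neq V(x)$'' fails. Take $V=\intv[1,3]\oplus\intv[2,2]\oplus\intv[2,2]$. Any valid separate-and-shift pushes at least one copy of $\intv[2,2]$ to $\intv[2,d']$ with $d'\geq 3$, so $\dim V'(3)\geq 2 > 1 = \dim V(3)$; your rule then skips position $3$, but $V(3)\neq 0$, so the resulting restriction is too short to be $V$. Worse, for some choices of the $d'_i$ that satisfy all separate-and-shift constraints (e.g.\ $d'_1=4$, $d'_2=5$, $d'_3=6$), \emph{no} line restriction of $V'$ whatsoever is isomorphic to $V$: the restrictions of $\intv[2,5]$ and $\intv[2,6]$ under any $\sigma$ are nested, and forcing both to be the same singleton leaves at most one extra position in $[1,4]\setminus[2,6]=\{1\}$ for the restriction of $\intv[1,4]$, which can then never have length $3$. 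So ``the definition of separate-and-shift ensures that on the retained positions the internal maps of $V'$ match those of $V$'' is simply false, and your ``main obstacle'' (i) is not overcome. You would have to specify and justify a particular choice of the $d'_i$ together with a compatible line $L$; the paper avoids this entirely by keeping $V$ as an actual layer.
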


\begin{proof}
The primal construction~\cite{buchet2020every} uses four layers as below:
\begin{equation}
  \label{eq:onedimS}
  S:
  \begin{tikzcd}[ampersand replacement=\&, column sep=large]
    I_V \rar{\smat{1\\\vdots\\1}} \&
    \widebar{V} \rar{\smat{1 & & \\ & \ddots & \\ & & 1}} \&
    V' \rar{\smat{1 & & \\ & \ddots & \\ & & 1}} \&
    V,
  \end{tikzcd}
\end{equation}
with the key condition that there are no non-zero morphisms between any two distinct intervals in the vertical collection $\widebar{V}$ (\cite[Lemma~4.2]{buchet2020every}).
Using a weaker condition than verticality for $\widebar{V}$, we can essentially skip $V'$ and reduce the number of layers needed by one.

Given a module
$V=\bigoplus_{i=1}^l \intv[b_i,d_i]$,
we choose a set of pairwise distinct values
$b'_1,\dots,b'_l$
such that
$b_i\leq b'_i \leq d_i$ for every $i$.
This is not possible in general as we would need more distinct values than can exist between $b_i$ and $d_i$.
However, following the definition of a line restriction being able to skip points (see the comment after Definition~\ref{defn:hyperplane_restriction}), we are free to add as many intermediate points as necessary, which the line restriction then skips.
Special care must be taken for multiple copies of the same simple summand $\intv[b_i,d_i]$ with $b_i = d_i$. For those, we artificially inflate the simple summands as $\intv[b_i-\epsilon, b_i+\epsilon]$ for some $\epsilon < 0.5$, where $b_i-\epsilon$ and $b_i+\epsilon$ can be taken as intermediate indices as above. The line restriction does not ``see'' this inflation, and is able to correctly recover $V$.

By a reordering of indices, we can assume that $b'_1<\dots<b'_l$.
We then set the values $d'_i$ for every $i$ such that $d'_i\geq d_i$ and $d'_1>\dots>d'_l$. Thus, we have
\begin{equation}
\label{ineq:bdorder1}
  b'_1 < \dots < b'_l \leq d'_l < \dots < d'_1
\end{equation}
and so
$
  b'_i < b'_j \leq d'_j < d'_i
$
whenever $i < j$. Using a proof similar to the one for \cite[Lemma~4.2]{buchet2020every}, we can prove that the matrix form of any endomorphism of the module
\[
  V''=\bigoplus_{i=1}^l \intv[b'_i,d'_i]
\]
is diagonal. Define
$
  I'_V=\intv[\max_i b'_i,\max_i d'_i] 
$.
We then claim that
\begin{equation}
  \label{eq:onedimShort}
  \hat{S}:
  \begin{tikzcd}[ampersand replacement=\&, column sep=large]
    I'_V \rar{\smat{1\\\vdots\\1}} \&
    V'' \rar{\smat{1 & & \\ & \ddots & \\ & & 1}} \&
    V
  \end{tikzcd},
\end{equation}
which can be viewed as a $2$D persistence module via stacking,
is indecomposable. We again remind the reader that  the ``$1$'''s are not identity maps.

By construction, $\hat S$ is well-defined.
Unlike the construction in \cite{buchet2020every} which provides $S$ with $\End{S}\cong K$, the endomorphism ring $\End{\hat S}$ is not necessarily isomorphic to $K$.
However, we can show that $\End{\hat S}$ is local.

To do this, we first consider the relation $\preccurlyeq$ on intervals defined by $\intv[a,b]\preccurlyeq\intv[c,d]$ if and only if there exists a non-zero morphism from $\intv[a,b]$ to $\intv[c,d]$, i.e. $c\leq a \leq d \leq b$ (\cite[Part~3~of~Lemma~3.2]{buchet2020every}).
This relation is reflexive and antisymmetric as $\intv[a,b]\preccurlyeq\intv[c,d]$ and $\intv[c,d]\preccurlyeq\intv[a,b]$ imply $a=c$ and $b=d$.
Second, given three distinct intervals $\intv[a,b]$, $\intv[c,d]$ and $\intv[e,f]$, if $\intv[a,b]\preccurlyeq\intv[c,d]$, and $\intv[c,d]\preccurlyeq\intv[e,f]$, then the relation $\intv[e,f]\preccurlyeq\intv[a,b]$ is impossible.
Indeed if this relation were satisfied then $a=e$ and $b=f$, but the two intervals are distinct by assumption.

We once more reorder the indices in $V$ such that identical intervals are grouped together and that
distinct intervals with
$\intv[b_i,d_i] \preccurlyeq \intv[b_j,d_j]$
implies $i < j$.
We similarly reorder the indices for the intervals composing $V''$ such that the map $V''\rightarrow V$ has the diagonal matrix form
$
  \smat{1 & & \\ & \ddots & \\ & & 1}.
$
Note that Inequality~\eqref{ineq:bdorder1} may no longer be satisfied after this reordering. This is not a problem however, as the matrix form of any endormorphism of $V''$ is still diagonal even after reordering.

Let $\phi\in\End{\hat S}$ and
consider its components $\phi_1:I_V\rightarrow I_V$, $\phi_2:V''\rightarrow V''$ and $\phi_3:V\rightarrow V$.
There exists $\alpha\in K$ such that $\phi_1$ is $\alpha$ times the identity of $I_V$. Furthermore, by commutativity relation and the fact that the matrix form of $\phi_2$ is diagonal, the matrix form of $\phi_2$ is $\smat{\alpha & &\\ &\ddots &\\ & &\alpha}$.

A key ingredient in the proof \cite{buchet2020every} of indecomposability of $S$ in \eqref{eq:onedimS}, the propagation of nonzeros lemma (\cite[Lemma~4.3]{buchet2020every}), does not apply in this setting for $\hat S$.
Instead, let us carefully consider the form of $\phi_3$,
denoting the entry at row $j$ column $i$ by ${\phi_3}_{j,i}$.
If $i > j$ and $\intv[b_i,d_i]\neq \intv[b_j,d_j]$, then there is no non-zero morphism from $\intv[b_i,d_i]$ to $\intv[b_j,d_j]$.
Hence ${\phi_3}_{j,i}=0$ for $i > j$ and $\intv[b_i,d_i]\neq \intv[b_j,d_j]$. Thus $\phi_3$ is block lower triangular.

Each of group of identical intervals in $V$ corresponds to a block along the diagonal of the matrix of $\phi_3$.
As all the intervals involved in a block on the diagonal are identical, the canonical morphisms between them are identity morphisms.
Using the commutativity relation between $\phi_3$ and $\phi_2$ with matrix form $\smat{\alpha & &\\ &\ddots &\\ & &\alpha}$,
each block on the diagonal must be $\alpha$ times the identity of the concerned intervals.

Note that the commutativity relation does not always guarantee that the off-diagonal blocks are zero. This stems from the fact that the composition of two non-zero canonical morphisms between intervals may be zero, and thus the relation only requires $0=0$, which always holds anyway with no restriction on the off-diagonal block. In contrast, along the diagonal, we have identity maps as canonical homomorphisms, which never goes to zero when composed with non-zeros.

To summarize, the matrix form of $\phi_3$ is lower triangular with all diagonal terms equal to $\alpha$ times a corresponding identity morphism. That is, each $\phi \in \End{\hat S}$ determines (but is not uniquely determined by) an $\alpha \in K$. The assignment $c(\phi) := \alpha$ determines an algebra homomorphism $c:\End(\hat S)\rightarrow K$.

Next, we check that the endomorphism $\phi$ is invertible if and only if $\alpha = c(\phi) \neq 0$. Note that we cannot rely on the determinant to ensure invertibility as the matrix entries are morphisms and not numbers. If $\alpha=0$, then $\phi$ is clearly not invertible, since $1 = \phi\psi$ implies that $1 = c(1) = c(\phi\psi) = c(\phi)c(\psi) = 0$, a contradiction.

On the other hand, suppose that $\alpha\neq 0$. We can find the inverse of $\phi$ by carefully using the classical Gaussian elimination algorithm on $\phi_3$. First, we use row operations to construct a left inverse for $\phi_3$.
As the entries are morphisms possibly having different domains and co-domains, each row operation must include a composition with the canonical morphism between the corresponding spaces.
To ensure that a complete elimination of off-diagonal entries is possible, this composition should never be $0$ when an entry must be eliminated (a non-zero off-diagonal).
To see this, note that a non-zero off-diagonal entry corresponds to a multiple of a non-zero canonical morphism. On the other hand, the pivot entries are on the diagonal with identity morphisms as their canonical morphisms, and thus have non-zero composition with any other non-zero morphism. This permits the elimination of the non-zero off-diagonal entries. Similarly, we construct a right inverse for $\phi_3$ using Gaussian elimination with column operations. Thus, $\phi_3$ (and therefore also $\phi$) is invertible, if $c(\phi) \neq 0$.

Using the above fact, let us check that $\End{\hat S}$ is local.
Suppose that an endomorphism $\phi$ is not invertible. Then $c(\phi)=0$, from which we conclude that $1-\phi$ is invertible as its matrix form is lower triangular with diagonal terms all equal to $c(1-\phi) = 1-c(\phi) = 1 \neq 0$.
Therefore $\End{\hat S}$ is local, and $\hat S$ is indecomposable.

This construction has the minimal number of rows.
Consider the module $V=\intv[a,b]\bigoplus\intv[c,d]$ with $a< b+1<c\leq d$.
Assume that $M$ contains $V$ as a line restriction, by a line $L$. Let $L$ send $b$ to $\vv x$, $b+1$ to $\vv y$, and $c$ to $\vv z$. Since $L$ is injective on objects, the vertices $\vv x$, $\vv y$, and $\vv z$ are all distinct.
We have 
$M(\vv x) = ML(b) \cong V(b) = K$, 
$M(\vv y) = ML(b+1) \cong V(b+1) = 0$, 
$M(\vv z) = ML(c) \cong V(c) = K$, 
and $\vv x\leq \vv y\leq \vv z$. By Lemma~\ref{lem:separator}, $M$ is decomposable.
\end{proof}

The construction of the above Theorem can be easily generalized to finite rectangle-decomposable modules in $n$D. However, we were unable to show that three layers is the minimum required in this case.
\begin{theorem}
  \label{thm:minimal_rect}
  Any $n$D finite rectangle-decomposable persistence module can be realized as the hyperplane restriction of some indecomposable $(n+1)$D persistence module by stacking three $n$D persistence modules, and any $n$D persistence module, by stacking four layers.
\end{theorem}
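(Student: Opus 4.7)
My plan for Theorem~\ref{thm:minimal_rect} is to extend the three-layer construction of Theorem~\ref{thm:minimal} from $1$D intervals to $n$D rectangles, and then handle the general $n$D case by appending a fourth layer, mirroring how Theorem~\ref{thm:nD} builds on Theorem~\ref{thm:rectangles}.

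For the first part, given $V = \bigoplus_{i=1}^{l} \intv[\vv{b_i}, \vv{d_i}]$, I would construct $V'' = \bigoplus_{i=1}^{l} \intv[\vv{b''_i}, \vv{d''_i}]$ by perturbing only in one chosen coordinate, say the first. In coordinate $1$, I would apply exactly the $1$D trick from Theorem~\ref{thm:minimal}: pick pairwise distinct $b^{''1}_i$ with $b^1_i \leq b^{''1}_i \leq d^1_i$, reorder the summands so that $b^{''1}_1 < \cdots < b^{''1}_l$, then choose $d^{''1}_i$ with $d^1_i \leq d^{''1}_i$ and $d^{''1}_1 > \cdots > d^{''1}_l$; multiple copies of a common summand and the $b^1_i = d^1_i$ case are handled by artificial inflation using intermediate indices that the hyperplane restriction skips. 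In each remaining coordinate $k \geq 2$, I would keep the births unchanged ($b^{''k}_i = b^k_i$) and push all deaths to a common large value $d^{''k}_i = D^k \geq \max_i d^k_i$. By construction, $\vv{b_i} \leq \vv{b''_i} \leq \vv{d_i} \leq \vv{d''_i}$ holds for every $i$, so canonical morphisms $\intv[\vv{b''_i}, \vv{d''_i}] \to \intv[\vv{b_i}, \vv{d_i}]$ exist summand-wise, and the coordinate-$1$ strict ordering rules out any non-zero morphism between distinct summands of $V''$ by the same argument used for intervals. Moreover, the choice $d^{''k}_i = D^k$ ensures $\max_i \vv{b''_i} \leq \min_i \vv{d''_i}$ componentwise, so that the single rectangle $I'_V \defeq \intv[\max_i \vv{b''_i}, \max_i \vv{d''_i}]$ admits a canonical morphism to each summand of $V''$.

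Having assembled $\hat{S}': I'_V \to V'' \to V$ with the obvious matrix-form morphisms, I would repeat the endomorphism analysis of Theorem~\ref{thm:minimal} almost verbatim. Any $\phi \in \End(\hat{S}')$ determines a scalar $\alpha \in K$ via $\phi_1 = \alpha \cdot \id_{I'_V}$; commutativity forces $\phi_2$ to be diagonal with all diagonal entries equal to $\alpha$; and after reindexing the summands of $V$ to group identical rectangles and to respect the natural partial order $\preccurlyeq$ (defined for $n$D rectangles exactly as for intervals, by the existence of a non-zero canonical morphism), $\phi_3$ is block lower triangular with diagonal blocks equal to $\alpha$ times the identity. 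The Gaussian-elimination-style invertibility argument of Theorem~\ref{thm:minimal} then shows that $\phi$ is invertible iff $\alpha \neq 0$, so $\End(\hat{S}')$ is local and $\hat{S}'$, viewed as an $(n+1)$D persistence module via stacking, is indecomposable. For the second part, I would start with an arbitrary $n$D module $V$ with finite support, produce a rectangle-decomposable surjection $p: R \twoheadrightarrow V$ using the projective-cover-plus-padding construction of Theorem~\ref{thm:nD} and Lemma~\ref{lem:padding}, apply the three-layer rectangle construction above to $R$, and append $V$ as a fourth layer to obtain $\hat{S}'': I'_R \to R'' \to R \xrightarrow{p} V$. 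Any $\phi \in \End(\hat{S}'')$ restricts to an endomorphism of the three-layer prefix, so $\phi_R$ is block lower triangular with diagonal $\alpha \cdot \id_R$; commutativity forces $\phi_V p = p \phi_R$, from which $\phi_R$ must preserve $\ker p$. When $\alpha \neq 0$, invertibility of $\phi_R$ gives $\phi_R(\ker p) = \ker p$ by finite-dimensionality, so $\phi_R^{-1}$ also descends through $p$ to a two-sided inverse for $\phi_V$; when $\alpha = 0$, $\phi_R$ is strictly lower triangular hence nilpotent, so $\phi$ is non-invertible. Thus the assignment $\phi \mapsto \alpha$ is an algebra homomorphism $\End(\hat{S}'') \to K$ detecting invertibility, so $\End(\hat{S}'')$ is local and $\hat{S}''$ is indecomposable.

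The main obstacle, in my view, lies in the first part: simultaneously achieving the canonical morphism condition $\vv{b_i} \leq \vv{b''_i} \leq \vv{d_i} \leq \vv{d''_i}$, the diagonal-endomorphism property of $V''$, and the single-rectangle cone property for $I'_V$. A naive attempt to directly reuse the separate-and-shift plus verticalization from Theorem~\ref{thm:rectangles} fails, because the verticalization midpoint $\vv{\mu}$ can exceed $\vv{d_i}$ in some coordinate, breaking the direct canonical morphism $V'' \to V$ that previously required the intermediate layer $V'$. The proposed fix of perturbing in a single coordinate while pushing deaths to a common ceiling in all other coordinates avoids this obstruction, but requires careful bookkeeping of duplicate and thin rectangles via intermediate indices, and a verification that the coordinate-$1$ argument used in Theorem~\ref{thm:minimal} for intervals carries over verbatim to $n$D rectangles once attention is restricted to that one coordinate.
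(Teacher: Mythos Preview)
Your proposal is correct and follows essentially the same approach as the paper's sketch: perturb births and deaths only in the first coordinate to obtain the nested ordering~\eqref{eq:series}, form the three-layer $\hat S$, and reuse the local-endomorphism-ring argument from Theorem~\ref{thm:minimal}; then append a fourth layer via the projective-cover surjection for the general case. Your explicit choice $b''^k_i = b^k_i$ and $d''^k_i = D^k$ for $k \geq 2$ is a concrete realization of what the paper leaves implicit, and your kernel-preservation argument for invertibility of $\phi_V$ is a valid alternative to the paper's slightly shorter ``$\phi_V p = p\phi_R$ with $\phi_R$ invertible and $p$ surjective $\Rightarrow \phi_V$ surjective $\Rightarrow \phi_V$ bijective by finite total dimension.''
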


\begin{proof}[Sketch of Proof]
Without loss of generality, we let $V = \bigoplus_{i=1}^m \intv[\vv{b_i},\vv{d_i}]$, a finite rectangle-decomposable $n$D persistence module.
We choose $\vv{b'_1}, \vv{b'_2}, \hdots, \vv{b'_m}$ with pairwise \emph{distinct first coordinates} such that
$\vv{b_i} \leq \vv{b'_i} \leq \vv{d_i}$.
As before, we take advantage of the definition of hyperplane restrictions to find enough distinct points to place between the births and deaths, with a similar inflation for simple summands. Since the first coordinates are distinct, without loss of generality we can rearrange the terms to have
$\vv{b'_1} \lessf \vv{b'_2} \lessf \hdots \lessf \vv{b'_m}$, where $\vv{x} \lessf \vv{y}$ means that the first coordinate of $\vv{x}$ is strictly less than that of $\vv{y}$.

We then choose $\vv{d'_i}$ with $\vv{d_i} \leq \vv{d'_i}$ so that 
$\vv{d'_m} \lessf \hdots \lessf \vv{d'_1}$. Overall, we have
\begin{equation}
\label{eq:series}
\vv{b'_1} \lessf \hdots \lessf \vv{b'_m}
\leq
\vv{d'_m} \lessf \hdots \lessf \vv{d'_1}
\end{equation}
and
\begin{equation}
\label{eq:compa}
    \vv{b_i} \leq \vv{b'_i} \leq \vv{d_i} \leq \vv{d'_i}
\end{equation}
for each $i$.

We then construct
$
V'' = \bigoplus\limits_{i=1}^m \intv[\vv{b'_i},\vv{d'_i}]
$
together with the map 
\begin{equation}
  \begin{tikzcd}[ampersand replacement=\&, column sep=large]
    V'' \rar{\smat{1 & & \\ & \ddots & \\ & & 1}} \&
    V
  \end{tikzcd}
\end{equation}
where the existence of the  nonzero morphisms (represented by $1$'s in the matrix form) on the diagonal are the canonical morphisms guaranteed by Inequalities~\eqref{eq:compa}.
It can be shown using the Inequalities~\eqref{eq:series} that
$
\Hom(\intv[\vv{b'_i},\vv{d'_i}], \intv[\vv{b'_j},\vv{d'_j}]) = 0
$
for $i \neq j$. Furthermore,
$
\Hom(\intv[\vv{b'_i},\vv{d'_i}], \intv[\vv{b'_i},\vv{d'_i}]) \cong K
$
for each $i$. Thus, the matrix forms of the endomorphisms of $V''$ are diagonal.

Finally, we define 
$
    I_V \defeq \intv\left[\max\limits_i\left(\vv{b'_i}\right), \max\limits_j\left(\vv{d'_j}\right)\right]
$
and form 
\begin{equation}
  \hat S:
  \begin{tikzcd}[ampersand replacement=\&, column sep=large]
    I'_V \rar{\smat{1\\\vdots\\1}} \&
    V'' \rar{\smat{1 & & \\ & \ddots & \\ & & 1}} \&
    V
  \end{tikzcd},
\end{equation}
an object of $\fun{\Af{3}}{\rep(\mathbb{Z}^n,\leq)}$, a $\rep(\mathbb{Z}^n,\leq)$-valued representation of $\Af{3}$. The proof that $\hat S$ is indecomposable is similar to that for the $1$D case given in Theorem~\ref{thm:minimal}. In particular, it can be shown that $\hat S$ is local.

For the second part of the theorem, we let $V$ be an $n$D persistence module with finite support.
Then, as in the arguments used in the proof for Theorem~\ref{thm:nD}, there exists a surjection $p: R\twoheadrightarrow V$ from $R$ a finite rectangle-decomposable module.

Using the three layer construction above for the finite rectangle-decomposable module $R$ and appending $p: R\twoheadrightarrow V$, we obtain
\[
    \hat S':
    \begin{tikzcd}[ampersand replacement=\&, column sep=large]
      I'_R \rar{\smat{1\\\vdots\\1}} \&
      R'' \rar{\smat{1 & & \\ & \ddots & \\ & & 1}} \&
      R \arrow[two heads]{r}{p}
      \& V.
    \end{tikzcd}
  \]
Let us show that $\hat S'$ is indecomposable. Consider $\phi \in \End{\hat S'}$. The restriction $\phi|_{\hat S}$ of $\phi$ to the first three terms gives an endomorphism of 
$
  \hat S:
  \begin{tikzcd}[ampersand replacement=\&, column sep=large]
    I'_R \rar \&
    R'' \rar \&
    R
  \end{tikzcd}
$. Since $\End{\hat S}$ is local, either $\phi|_{\hat S}$ or $1-\phi|_{\hat S}$ is invertible. Suppose that $\phi|_{\hat S}$ is invertible, and so $\phi|_R$ is invertible. The commutative relation on $\phi$ requires that $(\phi|_V) p = p (\phi|_R)$ so that $(\phi|_V) p (\phi|_R)^{-1} = p$. This shows that $\phi|_V:V\rightarrow V$ is a surjection. Since $V$ has finite total dimension, this shows that $\phi|_V$ is an isomorphism, and thus $\phi$ is invertible.
Similarly, $1-\phi|_{\hat S}$ invertible implies that $1-\phi$ is invertible.
Therefore $\End{\hat S'}$ is local, and $\hat S'$ is indecomposable.
\end{proof}

\section{Discussion}

The constructions provided in this paper 
illustrates the jumps in complexity when going from $n$D to $(n+1)$D persistence.
Namely, the whole complexity of $n$D persistence modules is fully contained in the building blocks one dimension higher, the
indecomposable $(n+1)$ persistence modules.
We hope that this point of view will lead to deeper insights into the algebraic difficulties of multidimensional persistence.

We also provided additional insight into a restriction with possibly reduced complexity, from the point of view of hyperplane restrictions.
The consideration of indecomposable $(n+1)$D persistence modules obtained by the stacking of exactly two $n$D persistence modules might be strictly less complex compared to the the general case, in terms of indecomposables containing all persistence modules one dimensional lower as hyperplane restrictions.
We showed that this is indeed the case in the $(n+1)=2$ case, where it corresponds to commutative ladders which have been studied in more details~\cite{escolar2016persistence}.
In higher dimensions, we note that even when restricted to two layers, it is still of wild representation type (a direct consequence of~\cite[Theorem~2.5]{leszczynski1994representation},~\cite[Theorem~1.3]{bauer2020cotorsion}). From that point of view, the restriction does not reduce the complexity. It may be worthwhile to explore the connections between representation type and hyperplane restrictions of indecomposables.

\bibliographystyle{plain}
\bibliography{refs}

\end{document}
